\newcommand{\nc}{\newcommand}
\nc{\les}{\lesssim}
\nc{\nit}{\noindent}
\nc{\nn}{\nonumber}
\nc{\D}{\partial}
\nc{\diff}[2]{\frac{d #1}{d #2}}
\nc{\diffn}[3]{\frac{d^{#3} #1}{d {#2}^{#3}}}
\nc{\pdiff}[2]{\frac{\partial #1}{\partial #2}}
\nc{\pdiffn}[3]{\frac{\partial^{#3} #1}{\partial{#2}^{#3}}}
\nc{\abs}[1] {\lvert #1 \rvert}
\nc{\cAc}{{\cal A}_c}
\nc{\cE}{{\cal E}}
\nc{\cF}{{\cal F}}
\nc{\cP}{{\cal P}}
\nc{\cV}{{\cal V}}
\nc{\cQ}{{\cal Q}}
\nc{\cGin}{{\cal G}_{\rm in}}
\nc{\cGout}{{\cal G}_{\rm out}}
\nc{\cO}{{\cal O}}
\nc{\Lav}{{\cal L}_{\rm av}}
\nc{\cL}{{\cal L}}
\nc{\cB}{{\cal B}}
\nc{\cZ}{{\cal Z}}
\nc{\cR}{{\cal R}}
\nc{\cT}{{\cal T}}
\nc{\cY}{{\cal Y}}
\nc{\cX}{{\cal X}}
\nc{\cXT}{{{\cal X}(T)}}
\nc{\cBT}{{{\cal B}(T)}}
\nc{\vD}{{\vec \mathcal{D}}}
\nc{\efield}{\mathcal{E}}
\nc{\vE}{{\vec \efield}}
\nc{\vB}{{\vec \mathcal{B}}}
\nc{\vH}{{\vec \mathcal{H}}}
\nc{\ty}{{\tilde y}}
\nc{\tu}{{\tilde u}}
\nc{\tV}{{\tilde V}}
\nc{\Pc}{{\bf P_c}}
\nc{\bx}{{\bf x}}
\nc{\bX}{{\bf X}}
\nc{\bXYZ}{{\bf XYZ}}
\nc{\bY}{{\bf Y}}
\nc{\bF}{{\bf F}}
\nc{\bS}{{\bf S}}
\nc{\dV}{{\delta V}}
\nc{\dE}{{\delta E}}
\nc{\TT}{{\Theta}}
\nc{\dPsi}{{\delta\Psi}}
\nc{\order}{{\cal O}}
\nc{\Rout}{R_{\rm out}}
\nc{\eplus}{e_+}
\nc{\eminus}{e_-}
\nc{\epm}{e_\pm}
\nc{\eps}{\varepsilon}
\nc{\vnabla}{{\vec\nabla}}
\nc{\G}{\Gamma}
\nc{\w}{\omega}
\nc{\mh}{h}
\nc{\mg}{g}
\nc{\vphi}{\varphi}
\nc{\tlambda}{\tilde\lambda}
\nc{\be}{\begin{equation}}
\nc{\ee}{\end{equation}}
\nc{\ba}{\begin{eqnarray}}
\nc{\ea}{\end{eqnarray}}
\nc{\g}{\gamma}
\nc{\ol}{\overline}
\newtheorem{theorem}{Theorem}[section]
\newtheorem{lemma}[theorem]{Lemma}
\newtheorem{prop}[theorem]{Proposition}
\newtheorem{rmk}[theorem]{Remark}
\nc{\pT}{\partial_T}
\nc{\pz}{\partial_z}
\nc{\pt}{\partial_t}
\nc{\la}{\langle}
\nc{\ra}{\rangle}
\nc{\infint}{\int_{-\infty}^{\infty}}
\nc{\halfwidth}{6.5cm}
\nc{\figwidth}{10cm}
\newcommand{\f}{\frac}
\nc{\nlayers}{L} \nc{\nsectors}{M}
\nc{\indicator}{\mathbf{1}}
\nc{\Rhole}{R_{\rm hole}}
\nc{\Rring}{R_{\rm ring}}
\nc{\neff}{n_{\rm eff}}
\nc{\Frem}{F_{\rm rem}}
\nc{\R}{\mathbb R}
\nc{\Z}{\mathbb Z}
\nc{\DD}{\Delta}
\nc{\cD}{\mathcal D}
\nc{\lnorm}{\left\|}
\nc{\rnorm}{\right\|}
\nc{\rnormp}{\right\|_{\ell^{p,\eps}}}
\nc{\rar}{\rightarrow}
\date{\today}
\begin{document}

\begin{abstract}

Let $H=-\Delta+V$ be a Schr\"odinger operator on $L^2(\R^4)$ with real-valued potential $V$, and let $H_0=-\Delta$.  If $V$ has sufficient pointwise decay, the wave operators
$W_{\pm}=s-\lim_{t\to \pm\infty} e^{itH}e^{-itH_0}$ are
known to be bounded on $L^p(\R^4)$ for all $1\leq p\leq \infty$ if zero is not an eigenvalue or resonance,
and on $\f43<p<4$ if zero is an eigenvalue but not a resonance.
We show that in the latter case, the wave operators are also bounded on $L^p(\R^4)$ for $1\leq p\leq \f43$ by direct examination of the integral kernel
of the leading terms.
Furthermore,
if 
$\int_{\R^4} xV(x) \psi(x) \, dx=0$ for all zero energy eigenfunctions $\psi$, then the wave operators
are bounded on $L^p$ for $1 \leq p<\infty$.

\end{abstract}

\title[$L^p$ boundedness of wave operators]{\textit{On the $L^p$ boundedness of wave operators for four-dimensional Schr\"odinger Operators with a threshold eigenvalue}}

\author[M.~J. Goldberg, W.~R. Green]{Michael Goldberg and William~R. Green}

\address{Department of Mathematics \\
University of Cincinnati\\
Cincinnati, OH 45221-0025}
\email{Michael.Goldberg@uc.edu}
\address{Department of Mathematics\\
Rose-Hulman Institute of Technology \\
Terre Haute, IN 47803 U.S.A.}
\email{green@rose-hulman.edu}

\thanks{This  work  
was  partially  supported  by  a  grant  from  the  Simons  Foundation (Grant  Number 281057
to  the first author.)}

\maketitle

\section{Introduction}

This work is inspired by a conjecture of Jensen and Yajima in
\cite{JY4} on the range of $p$ for which the Schr\"odinger wave operators are $L^p(\R^4)$ bounded in the presence of a
threshold eigenvalue.  In \cite{JY4} it was proven that the
wave operators are bounded on $L^p(\R^4)$ for $\f43<p<4$ in the presence of a threshold eigenvalue and conjectured that
the boundedness is true on $1\leq p\leq \f43$ as well. Recent works of Yajima, \cite{YajNew} and the authors \cite{GGwaveop} prove a similar results extending the lower range of $p$ for which $L^p(\R^n)$ bounds hold in dimensions $n>4$.  In this article we prove the conjectured bounds in four dimensions, and also show that the range of $p$ can be extended upwards under certain orthogonality conditions on the zero energy eigenspace.

Let $H=-\Delta+V$ be a Schr\"odinger operator
with a real-valued potential $V$ and
$H_0=-\Delta$.  If $V$  satisfies
$|V(x)|\les \la x\ra^{-2-}$, 
the spectrum of $H$ is the absolutely continuous
spectrum on $[0,\infty)$ and a finite collection of
non-positive eigenvalues,\ \cite{RS1}.
The wave operators are defined by the strong limits on $L^2(\R^n)$
\begin{align}
W_{\pm}f =\lim_{t\to\pm \infty} e^{itH}e^{-itH_0}f.
\end{align}
Such limits are known to exist and be asymptotically
complete  for a wide class of
potentials $V$.  
Furthermore, one has
the identities 
\begin{align}
W_\pm^* W_\pm=I, \qquad W_\pm W_\pm^*=P_{ac}(H),
\end{align}
with $P_{ac}(H)$ the projection onto the absolutely continuous
spectral subspace associated with the Schr\"odinger operator $H$.

We say that zero energy is regular if there are no zero
energy eigenvalues or resonances.
There is a zero energy eigenvalue if there is a
solution to $H\psi =0$ with $\psi\in L^2(\R^n)$, and a
resonance if $\psi\notin L^2(\R^n)$ instead belongs to a nearby space whose precise definition depends on the spatial dimension  $n\leq 4$.  In dimension $n=4$, a resonance satisfies $\la \cdot \ra^{-\epsilon} \psi \in L^2(\R^4)$ for any $\epsilon>0$.

There is a long history of results on the existence and
boundedness of the wave operators.  Yajima
has established $L^p$ and $W^{k,p}$ boundedness of the
wave operators for the full range of $1\leq p\leq \infty$ in \cite{YajWkp1,YajWkp3} in all
dimensions $n\geq 3$, provided that zero energy is regular
under varying assumptions on the potential $V$.  The sharpest
result for $n=3$ was obtained in \cite{Bec} by Beceanu .

If zero is not regular,
in general, the range of $p$ on which the wave operators are bounded shrinks.
When
$n>4$ it was shown in \cite{Yaj,FY} that the wave operators are bounded on $L^p(\R^n)$ when
$\frac{n}{n-2}<p<\frac{n}{2}$.
Independent results of 
Yajima \cite{YajNew,YajNew2} and the authors \cite{GGwaveop} then brought the lower bound on $p$ down to $1<p<\frac{n}{2}$ (with the authors obtaining $p=1$ as well), and found conditions under which the upper bound may be raised.  When $n=3$ Yajima \cite{YajNew3} showed that the wave operators are bounded on $1\leq p<3$ in the case of a zero energy eigenvalue and on $1<p<3$ but not $p=1$ in the case of a zero energy resonance.  This extended the range $\f32<p<3$ proven in \cite{Yaj}.  Finally, with conditions as in \cite{GGwaveop,YajNew2}, the full range of $1\leq p\leq \infty$ is recovered when $n=3$.

Results are also known in one dimension.  In \cite{Wed}, Weder showed that the wave operators are bounded on $L^p(\R)$ for $1<p<\infty$, and that the endpoint $p=1$ is possible under certain conditions on the Jost solutions, but is weak-type in general.  Further work in one dimension was done by
D'Ancona and Fanelli in \cite{DF}.    To the best of the
authors' knowledge, there are no results in the literature
when zero is not regular and $n=2$.  

An important property of the wave operators is the
intertwining identity,
\begin{align}\label{eqn:intertwine}
f(H)P_{ac}(H)=W_{\pm}f(-\Delta)W_{\pm}^*,
\end{align}
which is valid for any Borel function $f$.  Using this, one can deduce properties of the operator $f(H)$ from the
simpler operator $f(-\Delta)$, provided one has
control on mapping properties of the wave operators
$W_\pm$ and $W_\pm^*$.
In four dimensions, boundedness of the 
wave operators on $L^p(\R^4)$ for a given $p\geq 2$ imply the dispersive estimates
\begin{equation} \label{eqn:dispersive}
\|e^{itH}P_{ac}(H)\|_{L^{p'}\to L^p}\les
|t|^{-2+\frac{4}{p}}.
\end{equation}
Here $p^\prime$ is the conjugate exponent satisfying
$\frac{1}{p}+\frac{1}{p^\prime}=1$.  

There has been much work on dispersive estimates
for the Schr\"odinger evolution with zero energy obstructions in recent years by Erdo\smash{\u{g}}an, Schlag, Toprak and the authors in various combinations, see \cite{ES2,goldE,EG,EGG,GGodd,GGeven,GT} in which $L^1(\R^n)\to L^\infty(\R^n)$ were studied for all $n>1$.  Estimates in $L^p(\R^n)$ are obtained by interpolating these results with the  $L^2$ conservation law.  These works
have roots in previous work of \cite{JSS}, and also in
\cite{Jen2,Mur} where the dispersive estimates were studied as operators on weighted
$L^2(\R^n)$ spaces.

Our main result confirms the conjecture of Jensen and Yajima,
as well as extending the range of $p$ upward under certain conditions on the zero energy eigenspace.  
\begin{theorem}\label{thm:main}
	
	Let
	$\sigma>\frac{2}{3}$.  Assume that
	$|V(x)|\les \la x\ra^{-\beta}$ for 
	$\beta>6$,
	\begin{align}\label{Vfour}
	\mathcal F\big(\la \cdot \ra^{2\sigma} V \big)
	\in L^{\f32}(\R^4),
	\end{align}
	and $H = -\Delta+V$ has an eigenvalue at zero with no resonance.
	\begin{enumerate}[i)]
		\item \label{reg result}
		The wave operators extend to bounded operators on $L^p(\R^4)$
		for all $1 \leq p < 4$.

		\item \label{Pvx result} If  
		$\int_{\R^4} xV(x) \psi(x) \, dx=0$
		for all zero energy
		eigenfunctions $\psi$, then the wave operators
		extend to bounded operators on $L^p(\R^4)$ for all
		$1 \leq p<\infty$.		
		
	\end{enumerate}
	
\end{theorem}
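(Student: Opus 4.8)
The plan is to follow the stationary-phase / resolvent-expansion strategy that underlies all of the Yajima-type results, starting from the representation of the wave operator via the Born series together with the ``high energy'' and ``low energy'' decomposition. Concretely, I would write $W_{+} = I + (\text{finitely many Born terms}) + W_{\mathrm{low}} + W_{\mathrm{high}}$, where the high-energy part is handled exactly as in the regular case (Yajima \cite{YajWkp1,YajWkp3}) using the decay hypothesis $|V(x)|\lesssim \la x\ra^{-\beta}$ with $\beta>6$ and the Fourier condition \eqref{Vfour}; these contributions are bounded on all $L^p$, $1\le p\le\infty$, and are not the source of any restriction. All the difficulty is concentrated in the low-energy part, where the zero-energy eigenvalue forces the resolvent $(H-\lambda^{2})^{-1}$ to have a singular expansion near $\lambda=0$. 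The governing operator identity is the symmetric resolvent identity: with $M(\lambda) = U + v R_0(\lambda^{2}) v$ (where $V = Uv^{2}$, $v=|V|^{1/2}$), one has $(M(\lambda))^{-1}$ expanding in powers of $\lambda$ and $\log\lambda$, and the leading singular term is governed by the projection $S_1$ onto the zero-energy eigenspace. The concrete object to analyze is the integral kernel of the operator obtained by inserting this leading term into the stationary-phase formula for $W_{+}$.

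For part \eqref{reg result}, the task is to show the leading low-energy term — call its kernel $K(x,y)$ — defines an operator bounded on $L^p(\R^4)$ for $1\le p<4$, with the genuinely new content being the endpoint range $1\le p\le 4/3$ that Jensen–Yajima left open. The strategy I would use is direct kernel estimation rather than interpolation: derive a pointwise bound of the schematic form $|K(x,y)| \lesssim |x|^{-?}\,|y|^{-?}\,(\text{localization in } |x|\sim|y| \text{ or } |x|\gtrsim 1)$ coming from the fact that zero-energy eigenfunctions in $\R^4$ decay like $\la x\ra^{-2}$ (this is where $\beta>6$ and the extra weight in \eqref{Vfour} are used, to get enough decay on $\psi$ and on $v\psi$). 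One then checks $L^p$-boundedness of the resulting kernel by Schur-type testing against power weights, or by recognizing it as (a piece of) a fractional-integration / Riesz-potential-like operator composed with multiplication operators, whose $L^p\to L^p$ mapping range is exactly dictated by the homogeneity. The lower endpoint $p=1$ should come out because the worst kernel behaves like $\la x\ra^{-2}$ times a fixed $L^1_y$ function (the eigenfunction-weighted potential), making it a bounded — even compact — operator on $L^1$; the upper cutoff $p<4$ is the $\R^4$ reflection of the $\la x\ra^{-2}$ decay failing to be in $L^4$.

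For part \eqref{Pvx result}, the orthogonality hypothesis $\int xV(x)\psi(x)\,dx = 0$ is exactly the condition that kills the first-order (dipole) term in the large-$|x|$ expansion of the eigenfunction, equivalently it upgrades $S_1$ to lie in a smaller space $S_2$ and cancels the most singular ($\lambda^{-2}$-type, in the relevant normalization) contribution to $M(\lambda)^{-1}$. After this cancellation the leading surviving kernel decays one power faster in $|x|$ — schematically $\la x\ra^{-3}$ instead of $\la x\ra^{-2}$ — which is now in $L^p_x$ for all $p<\infty$, yielding the extended range $1\le p<\infty$ (with $p=\infty$ still excluded by a logarithmically divergent or borderline term). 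So the scheme is: (a) expand $M(\lambda)^{-1}$ near zero, isolating the term(s) proportional to $S_1$; (b) feed each term into the oscillatory-integral formula for $W_+$ and compute/estimate its kernel; (c) under the orthogonality condition, observe the leading term vanishes and redo (b) for the next term.

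The main obstacle I expect is step (b): controlling the oscillatory integral $\int_0^\infty e^{i\lambda(\ldots)} \lambda\,\chi(\lambda)\,[\text{singular factor in }\lambda]\,v R_0(\lambda^2)\cdots\,d\lambda$ and extracting a clean pointwise kernel bound in $x$ and $y$ that is simultaneously good enough near the diagonal, near $\{|x|\sim|y|\}$, and in the regime one of $|x|,|y|$ is large and the other bounded — since it is precisely the last regime (large $|x|$, moderate $|y|$) that determines the endpoint $p$, and naive estimates there tend to lose exactly the factor that matters. Managing the $\log\lambda$ terms in the four-dimensional expansion, and making sure the error (non-leading) terms are bounded on the full range $1\le p\le\infty$ so they do not themselves obstruct the endpoints, is the routine-but-delicate part that supports the main estimate.
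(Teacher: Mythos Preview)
Your overall strategy --- high/low energy split, focus on the leading singular low-energy term, and direct pointwise kernel estimation --- matches the paper's. But there is a genuine gap: you never invoke the automatic cancellation $\int_{\R^4} V(x)\psi(x)\,dx = 0$ (written $P_eV1=0$ in the paper), which holds for every zero-energy eigenfunction precisely because there is no resonance. This is not a minor bookkeeping point; it is the engine of the whole argument. The paper uses $P_eV1=0$ three separate times: to upgrade the eigenfunction decay from $\la x\ra^{-2}$ to $\la x\ra^{-3}$; to subtract a constant from $(R_0^+-R_0^-)(\lambda^2,|y-w|)$ and gain a power of $\la y\ra^{-1}$ via a Taylor remainder when $|y|>2|x|$; and, symmetrically in the $z$ variable, to push the $|x|>2|y|$ bound from $\la x\ra^{-4}$ to $\la x\ra^{-5}$, which is what secures $p=1$. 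Most critically, in the region $|x|\approx|y|$ the naive bound without this cancellation is $\la x\ra^{-3}\la |x|-|y|\ra^{-1}$, which is \emph{not} bounded on $L^p(\R^4)$ for any $p$ (the paper remarks on this explicitly). So your proposed route, as written, would simply fail there.

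Your numerics are also off by one level of cancellation throughout. You say eigenfunctions decay like $\la x\ra^{-2}$; with $P_eV1=0$ they decay like $\la x\ra^{-3}$. You attribute the restriction $p<4$ to ``$\la x\ra^{-2}$ failing to be in $L^4$,'' but $\la x\ra^{-2}\in L^4(\R^4)$; the actual obstruction is the $\la y\ra^{-3}$ decay of $K(x,y)$ in the regime $|y|>2|x|$, which (via a weak-$L^{4/3}$ duality / Lorentz interpolation argument) gives exactly $1\le p<4$. For part~\ref{Pvx result}) the hypothesis $P_eVx=0$ does not ``upgrade $S_1$ to $S_2$'' --- that upgrade is $P_eV1=0$, already automatic --- rather it allows a second-order Taylor subtraction, pushing $\la y\ra^{-3}$ to $\la y\ra^{-4}$ and hence $p<4$ to $p<\infty$. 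Once you insert $P_eV1=0$ at the right places and redo the bookkeeping, your plan becomes the paper's proof.
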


We expect that the endpoint case $p=\infty$
holds if one has the additional cancellation $\int_{\R^4} x^2 V(x) \psi(x) \, dx=0$ and slightly more
decay on the potential, see Remark~\ref{PVx2 rmk} below.  The corresponding dispersive estimate
from $L^1(\R^4)$ to $L^\infty(\R^4)$
was recently shown to be true even without this extra hypothesis by the second author and Toprak in~\cite{GT}.

We prove Theorem~\ref{thm:main} for $W=W_-$,
the proof for $W_+$ is identical up to complex conjugation.
Define $R_0^\pm(\lambda^2) := \lim\limits_{\eps \to 0^+} (H_0 - (\lambda \pm i\eps)^2)^{-1}$
and $R_V^+(\lambda^2) := \lim\limits_{\eps \to 0^+} (H - (\lambda +i\eps)^2)^{-1}$ 
as the free and perturbed resolvents, respectively.  These operators are well-defined on polynomially
weighted $L^2(\R^4)$ spaces due to the limiting absorption
principle of Agmon, \cite{agmon}.
We use the  stationary representation of the wave operator
\begin{align}\label{stat rep}
Wu &=u-\frac{1}{\pi i} \int_0^\infty \lambda R_V^+(\lambda^2)V
[R_0^+(\lambda^2)-R_0^-(\lambda^2)]\, u\, d\lambda \\
&= u-\frac{1}{\pi i} \int_0^\infty \lambda \big[R_0^+(\lambda^2) -R_0^+(\lambda^2)VR_V^+(\lambda^2)\big]V
[R_0^+(\lambda^2)-R_0^-(\lambda^2)]\, u\, d\lambda.\nn
\end{align}
The 
identity
$R_V^+(\lambda^2)=R_0^+(\lambda^2)-R_0^+(\lambda^2)VR_V^+(\lambda^2)$ relates the resolvents and justifies
the equality in~\eqref{stat rep}.
In dimension $n=4$, the free
resolvent operators $R_0^\pm(\lambda^2)$ are bounded as $\lambda \to 0$, as
are the perturbed resolvents $R_V^\pm(\lambda^2)$ if zero is regular.  When zero is not regular,
the perturbed resolvent
becomes singular as $\lambda\to 0$.  

We divide the representation for $W$ in \eqref{stat rep} into `high' and `low' energy parts, by writing
$W=W\Phi^2(H_0)+W \Psi^2(H_0)$ with $\Phi, \Psi \in C_0^\infty(\R)$ smooth cut-off functions that satisfy
$\Phi^2(\lambda)+\Psi^2(\lambda)=1$ with $\Phi(\lambda^2)=1$ for $|\lambda|\leq \lambda_0/2$ and
$\Phi(\lambda^2)=0$ for $|\lambda|\geq \lambda_0$ for a
suitable constant $0<\lambda_0\ll 1$.  This allows us to 
write $W=W_<+W_>$, with $W_<$ the `low energy' portion
of the wave operator and $W_>$ the `high energy' portion.
The high energy term $W_>$ is controlled in~\cite{JY4}, and this argument remains valid when zero energy is a resonance or eigenvalue, whose effects are limited to only an arbitrary small neighborhood of zero energy. 
Our technical analysis proceeds much in the same vein as \cite{GGwaveop}.  We isolate the leading
order terms of $W_<$ caused by the singularity of $R_V^\pm(\lambda^2)$ near $\lambda=0$ and determine their $L^p(\R^4)$ operator bounds through a careful, pointwise analysis of their integral kernels.


The next section introduces some ideas for controlling the size of the
leading order expression in $W_<$ when there is a zero energy eigenvalue,
and describes how certain pointwise bounds on the integral kernel correspond
to operator estimates in $L^p(\R^4)$.  In Section~\ref{sec:main} we
calculate the leading order expression in detail and complete the proof of
Theorem~\ref{thm:main}, modulo a number of integral estimates that are
stated in Appendix~\ref{sec:appA}.  The discussion concludes in Section~\ref{sec:res} with some remarks about
the case where a zero energy resonance is present.


\section{Preliminary Steps}

In four dimensions it is well-known (see \cite{JY4,EGG,GT}) that if there is a zero energy eigenvalue but no zero energy resonance, then  the perturbed resolvent $R_V^+(\lambda^2)$ in~\eqref{stat rep} has a pole of order two
whose residue is the finite-rank projection $P_e$ onto the eigenspace.  Furthermore, each zero energy eigenfunction $\psi$ has the
cancellation property
\be
\int_{\R^4} V(x) \psi(x)\, dx=0,\label{eqn:PV1}
\ee 
which we express in the shorthand $P_eV1=0$.  This fact is crucial in obtaining the full range of $p$ as it permits improved estimates for the most singular terms in the expansion of $W_<$ that dictate the allowable range of $p$.  The extra cancellation condition in part~\ref{Pvx result}) of Theorem~\ref{thm:main}, namely that $\int_{\R^4} x_j V\psi(x) \,dx = 0$ for each $j \in [1,n]$, will be called $P_eVx=0$.

Using the low energy expansion for $W_{<}$ in \cite{JY4}, the leading term is 
given by the operator
\begin{equation} \label{eqn:Ws}
W_{s}=\frac{1}{\pi i} \int_0^\infty R_0^+(\lambda^2)
VP_eV(R_0^+(\lambda^2)-R_0^-(\lambda^2))
\tilde \Phi(\lambda)\lambda^{-1}\, d\lambda.
\end{equation}
Here $\tilde \Phi(\lambda)\in C_c^\infty(\R)$ is such 
that $\tilde \Phi(\lambda)\Phi(\lambda^2)=\Phi(\lambda^2)$.
In the absence of a zero energy resonance, the next operator we need to control is
\begin{equation} \label{eqn:Wlog}
W_{log}=\frac{1}{\pi i} \int_0^\infty R_0^+(\lambda^2)
L_1 (R_0^+(\lambda^2)-R_0^-(\lambda^2))
\tilde \Phi(\lambda)\lambda \log(\lambda )\, d\lambda,
\end{equation}
where $L_1$ is a finite rank operator with kernel
$$
L_1(x,y)=\sum_{j,k=1}^d a_{jk} V\psi_j(x) V\psi_k(y),\qquad\qquad a_{jk}\in \R,
$$
and $\{\psi_j\}_{j=1}^d$ form an orthonormal basis for the zero energy eigenspace.

One can show that the remaining terms in the expansion of $W_<$ are
better behaved.
Thus the estimates on $W_{s}$, and to a lesser extent $W_{log}$,
dictate the mapping properties of $W_<$ itself.
The presence or absence of threshold eigenvalues has little effect
on properties of the resolvent outside a small neighborhood of
$\lambda = 0$, so the estimates for $W_>$ are unchanged.
Therefore our primary effort will be to control the mapping
properties of the operator $W_s$.

\begin{prop} \label{prop:PV1}
	Assume that $|V(z)|\les \la z\ra^{-\delta}$ for some $\delta>0$.
	\begin{enumerate}[i)]
		\item If $\delta>4$, then
		$W_s$ is bounded on $L^p(\R^4)$ for $ 1 \leq p<4$.\label{prop:PV1pt1}
		
		\item If $\delta>6$, and the zero energy eigenspace satisfies the 
		cancellation condition $P_eVx=0$, then $W_s$ is bounded on $L^p(\R^4)$
		for $1 \leq p < \infty$.\label{prop:PV1pt2}
		
	\end{enumerate}
	
\end{prop}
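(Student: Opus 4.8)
The strategy is to reduce the $L^p$ mapping properties of $W_s$ to pointwise bounds on its integral kernel, using the finite rank of $P_e$ together with the threshold cancellation $P_eV1=0$ from \eqref{eqn:PV1}. Writing $P_e=\sum_{j=1}^d\psi_j\otimes\psi_j$ for an orthonormal basis $\{\psi_j\}$ of the zero energy eigenspace, \eqref{eqn:Ws} becomes
\be
W_s(x,y)=\f{1}{\pi i}\sum_{j=1}^d\int_0^\infty\big(R_0^+(\lambda^2)V\psi_j\big)(x)\,\big((R_0^+(\lambda^2)-R_0^-(\lambda^2))V\psi_j\big)(y)\,\tilde\Phi(\lambda)\,\lambda^{-1}\,d\lambda.
\ee
In $\R^4$ the free resolvent has kernel $R_0^\pm(\lambda^2)(x,u)=|x-u|^{-2}e^{\pm i\lambda|x-u|}\,\omega_\pm(\lambda|x-u|)$, where $\omega_\pm$ is a symbol with $\omega_\pm(0)\neq0$, a $z^2\log z$ term near $z=0$, and $|\D_z^k\omega_\pm(z)|\les\la z\ra^{\f12-k}$; likewise $(R_0^+-R_0^-)(\lambda^2)(v,y)=|v-y|^{-2}\sum_\pm e^{\pm i\lambda|v-y|}b_\pm(\lambda|v-y|)$ with $b_\pm$ symbols of order $\f12$ whose constant terms cancel, so that the spectral density vanishes to second order in $\lambda$ at the threshold. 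Substituting these, $W_s(x,y)$ is a finite sum of $\lambda$-integrals over $0<\lambda\les\lambda_0$ whose integrands carry the phases $e^{i\lambda(|x|\pm|y|+\cdots)}$ once the $u$- and $v$-integrations are carried out; controlling these $\lambda$-integrals is the bulk of the work.

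The second step is to extract the cancellation. For $\lambda|x|\gtrsim1$ one replaces $R_0^+(\lambda^2)(x,u)$ by its large-argument asymptotics $c\,\lambda^{1/2}|x-u|^{-3/2}e^{i\lambda|x-u|}$ and expands the phase $|x-u|=|x|-\hat x\cdot u+\order(|u|^2/|x|)$, so that
$$
\big(R_0^+(\lambda^2)V\psi_j\big)(x)=c\,\f{\lambda^{1/2}e^{i\lambda|x|}}{|x|^{3/2}}\,\widehat{V\psi_j}(\lambda\hat x)+(\text{error terms}),
$$
and similarly on the $y$-side for the $(R_0^+-R_0^-)$ factor. The identity \eqref{eqn:PV1} states exactly that $\widehat{V\psi_j}(0)=0$, so $\widehat{V\psi_j}(\lambda\hat x)=\order(\lambda)$ once $\la u\ra V\psi_j\in L^1(\R^4)$, which gains one power of $\lambda$ to compensate $\lambda^{-1}$; in the complementary regime $\lambda|x|\les1$ the Green's-function bound for $R_0^+$ together with the $\order(\lambda^2)$ vanishing of $R_0^+-R_0^-$ plays the same role. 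Under the additional hypothesis $P_eVx=0$ one also has $\nabla\widehat{V\psi_j}(0)=0$, hence $\widehat{V\psi_j}(\lambda\hat x)=\order(\lambda^2)$ when $\la u\ra^2V\psi_j\in L^1(\R^4)$ (this is where the stronger decay $\delta>6$ enters), providing an extra power of $\lambda$; it is precisely this extra power that will widen the admissible range of $p$.

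The third step is to perform the $\lambda$-integral. After the cancellation each remaining integrand has the form $\lambda^m\,(\text{symbol in }\lambda)\,e^{i\lambda(|x|\pm|y|+\cdots)}\,\tilde\Phi(\lambda)$ with $m\geq0$ (and $m$ larger in part \ref{prop:PV1pt2})), supported in $\lambda\les\lambda_0$; repeated integration by parts in $\lambda$ produces decay in $\big||x|\pm|y|\big|$, while the contribution near $\lambda=0$ is governed by $m$ and the cutoff. The net outcome is a collection of pointwise bounds on $W_s(x,y)$ in terms of the weights $\la x\ra$, $\la y\ra$ and $\big||x|\pm|y|\big|$, of the type catalogued in the integral estimates of Appendix~\ref{sec:appA}; each such bound is then converted into an $L^p(\R^4)$ operator bound by the criterion relating pointwise kernel bounds to $L^p$ estimates developed earlier in this section. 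The endpoint $p=4$ in part \ref{prop:PV1pt1}) is lost because of the $\la x\ra^{-(n-2)}=\la x\ra^{-2}$ spatial decay carried by the outermost $R_0^+(\lambda^2)$ factor (the range of $-\Delta$ applied to well-localized data in $\R^4$ sits in $L^p$ only for $p<4$), and the gain from $P_eVx=0$ converts this into a decay that is admissible for every $p<\infty$. One also needs the transposed family of bounds, with the roles of $x$ and $y$ interchanged, since $R_0^+$ on the left and $R_0^+-R_0^-$ on the right are not symmetric.

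The main obstacle is the third step: obtaining \emph{sharp} pointwise bounds on $W_s(x,y)$ after the $\lambda$-integration, uniformly down to the threshold. The delicate points are the logarithmic term in $\omega_\pm$ near $\lambda=0$ (which is the reason $W_{log}$ in \eqref{eqn:Wlog} must be treated separately), keeping the phase-expansion remainders $\order(|u|^2/|x|)$ genuinely subordinate to the leading term after integration in both $u$ and $\lambda$, and ensuring that all the resulting weighted kernels fall inside the class for which the $L^p$ criterion yields boundedness on the full claimed range rather than a strict subinterval.
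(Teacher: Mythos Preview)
Your plan is a legitimate alternative route, but it differs substantially from what the paper actually does.  You propose a \emph{phase-expansion/Fourier} approach: pull the oscillatory factor $e^{i\lambda|x-u|}$ out as $e^{i\lambda|x|}e^{-i\lambda\hat x\cdot u}$ (plus remainders), convert the $u$-integral into $\widehat{V\psi_j}(\lambda\hat x)$, and read the cancellation $P_eV1=0$ as $\widehat{V\psi_j}(0)=0$, gaining a power of $\lambda$.  The paper instead keeps the spatial integrals intact and implements the cancellation by \emph{subtracting} $(R_0^+-R_0^-)(\lambda^2,|y|)$ inside the $w$-integral (this is constant in $w$, so has zero average against $V\psi$ by \eqref{eqn:PV1}), and then uses the mean-value identity \eqref{eqn:Taylor1} to trade the difference for $\partial_r(R_0^+-R_0^-)(\lambda^2,|y-sw|)$ times an extra $|w|$, with no phase-expansion remainder at all.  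The $\lambda$-integral is then performed \emph{first}, via Lemma~\ref{lem:lambdaInt}, yielding a concrete function of $A=|x-z|$ and $B=|y-w|$; only afterward are the $z,w$ integrals done, using Lemmas~\ref{lem:A1}, \ref{lem:B1}, \ref{lem:largew}.  This order (integrate $\lambda$ first, then space) lets the paper completely avoid the regime splitting $\lambda|x|\lessgtr 1$ and the $\order(|u|^2/|x|)$ error that you correctly identify as the main obstacle in your scheme.  Under $P_eVx=0$ the paper subtracts one further linear term and invokes the second-order remainder \eqref{eqn:K2canc}, again in place of your $\nabla\widehat{V\psi_j}(0)=0$.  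Both programs funnel into the same pointwise-to-$L^p$ machinery (Lemmas~\ref{lem:xbig}--\ref{lem:xapproxy}), so the endpoint is the same; the paper's route is shorter because it never linearizes the phase.

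One point where your heuristic is off: the restriction $p<4$ in part~\ref{prop:PV1pt1}) does \emph{not} come from the $\la x\ra^{-2}$ decay of the left-hand $R_0^+$.  In the paper the decisive region is $|y|>2|x|$, where one cancellation on the $w$-side gives $|K(x,y)|\les\la x\ra^{-2}\la y\ra^{-3}$; it is the $\la y\ra^{-3}$ (Lemma~\ref{lem:ybig} with $\beta=3$) that forces $p<4$, and the extra hypothesis $P_eVx=0$ improves exactly this exponent to $\la y\ra^{-4}$, yielding $p<\infty$.  The left-hand cancellation (in the $z$ variable) is used in a separate region, $|x|>2|y|$, and there its only role is to push $\la x\ra^{-4}$ up to $\la x\ra^{-5}$ so that the $p=1$ endpoint holds.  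If you execute your plan, the cancellation that governs the upper range of $p$ should therefore be applied on the $y$-side (the spectral-measure factor $R_0^+-R_0^-$), not on the $x$-side.
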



The proof of this proposition is based on pointwise bounds for  the integral kernel $K(x,y)$  of the operator $W_s$. 
To get started,
the kernel of $W_{s}$ is a sum of integrals of the form
\begin{align}\label{eqn:Ws22}
K^{jk}(x,y) = \int_0^\infty \iint_{\R^{8}} R_0^{+}(\lambda^2)(x,z)& V(z)\psi_{j}(z)
V(w) \psi_{k}(w) \\  
&(R_0^+-R_0^-)(\lambda^2)(w,y)
\frac{\tilde\Phi(\lambda)}{\lambda} \, dwdz\, d\lambda, \nn
\end{align}
where the functions $\{\psi_j\}_{j=1}^N$ form an orthonormal basis for the zero energy eigenspace.  

For the remainder of the paper, we omit the subscripts 
on the eigenfunctions as our calculations will be
satisfied for any such $\psi$.  Our estimates are 
stated for an operator kernel $K(x,y)$ with the understanding that
each $K^{jk}(x,y)$ obeys the same bounds.  
To analyze the kernel, we split into three regimes based on the relative size of $|x|$ and $|y|$: $|x| > 2|y|$, $|y| > 2|x|$, and $|x| \approx |y|$.
The operator estimates resulting from a typical pointwise bound in each regime
are summarized in the following three lemmas.

\begin{lemma} \label{lem:xbig}
	Suppose $K(x,y)$ is an integral kernel supported in the region of $\R^8$ where $|x| > 2|y|$, and
	$|K(x,y)| \les \la x\ra^{-4-\alpha}$.  
	\begin{enumerate}[i)]
		
		\item If $\alpha>0$, then $K$ defines a bounded operator on
		$L^p(\R^4)$ for all $1 \leq p \leq \infty$.  
		
		\item If $\alpha=0$, then $K$ defines a
		bounded operator on $L^p(\R^4)$ for all $1 < p \leq \infty$ but it may not be bounded on $L^1(\R^4)$.
		
	\end{enumerate}
	
\end{lemma}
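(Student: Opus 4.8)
The plan is to treat Lemma~\ref{lem:xbig} as an elementary Schur-type estimate, since the decisive feature of the hypothesis is that the bound $|K(x,y)|\les\la x\ra^{-4-\alpha}$ carries no dependence on $y$ whatsoever; the variable $y$ enters only through the support restriction $|y|<\tfrac12|x|$. First I would dispose of $L^\infty$ directly, in both cases $\alpha>0$ and $\alpha=0$: for $f\in L^\infty(\R^4)$,
\[
|(Kf)(x)|\les \la x\ra^{-4-\alpha}\int_{|y|<|x|/2}|f(y)|\,dy\les \la x\ra^{-4-\alpha}|x|^4\|f\|_\infty\les \la x\ra^{-\alpha}\|f\|_\infty,
\]
which is $\les\|f\|_\infty$ whenever $\alpha\ge0$, so $K$ is bounded on $L^\infty(\R^4)$.

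For part i) ($\alpha>0$) I would next establish the $L^1$ bound by Fubini's theorem, since $|K|$ is a genuine nonnegative kernel:
\[
\|Kf\|_{L^1}\les\int_{\R^4}|f(y)|\Big(\int_{|x|>2|y|}\la x\ra^{-4-\alpha}\,dx\Big)dy\les\int_{\R^4}|f(y)|\,\la y\ra^{-\alpha}\,dy\les\|f\|_{L^1},
\]
the inner integral converging precisely because $\alpha>0$ makes $\la\cdot\ra^{-4-\alpha}$ integrable at infinity in $\R^4$, with decay $\la y\ra^{-\alpha}$ coming from the lower cutoff $|x|>2|y|$. Interpolating (Riesz--Thorin) between these $L^1$ and $L^\infty$ bounds yields boundedness on $L^p(\R^4)$ for every $1\le p\le\infty$. (Equivalently, a single Schur test with weight $h\equiv1$ does the same job.)

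For part ii) ($\alpha=0$) the $L^1$ endpoint genuinely fails, and pointing this out is the one place requiring care: with $K(x,y)=\la x\ra^{-4}\indicator(|x|>2|y|)$ and $f=\indicator_{\{|y|<1\}}\in L^1$ one gets $(Kf)(x)\gtrsim\la x\ra^{-4}$ on $|x|>2$, which is not integrable on $\R^4$. To obtain the range $1<p<\infty$ I would run a weighted Schur test with the power weight $h(x)=\la x\ra^{-s}$ for any fixed exponent $0<s<4/p'$, for which one checks
\begin{align*}
\int_{|y|<|x|/2}|K(x,y)|\,\la y\ra^{-sp'}\,dy&\les\la x\ra^{-4}\la x\ra^{4-sp'}=h(x)^{p'},\\
\int_{|x|>2|y|}|K(x,y)|\,\la x\ra^{-sp}\,dx&\les\la y\ra^{-sp}=h(y)^{p},
\end{align*}
the first inequality using $sp'<4$ and the second using $sp>0$. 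Such an exponent $s$ exists for every $p\in(1,\infty)$ but for no value when $p=1$ (where $p'=\infty$), which is exactly the threshold reflected in the statement. Combining the resulting $L^p$ bounds for $1<p<\infty$ with the $L^\infty$ bound established above completes part ii).

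I do not anticipate any real obstacle here: the whole argument is a size estimate with no oscillation or cancellation involved. The only things one must be attentive to are the genuine (logarithmic) failure of the $L^1$ bound when $\alpha=0$ — which is why that endpoint is excluded and should be illustrated by an explicit example — and choosing the Schur weight exponent so that $sp'$ stays strictly below $4$.
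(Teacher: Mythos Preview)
Your argument is correct. Part~i) matches the paper's approach exactly: both amount to the unweighted Schur test (the paper states it that way; you phrase it as separate $L^1$ and $L^\infty$ bounds plus interpolation, and correctly note the equivalence).

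Part~ii) is where you diverge. The paper observes that for $\alpha=0$,
\[
|(Kf)(x)|\les |x|^{-4}\int_{|y|<|x|}|f(y)|\,dy\les \mathcal{M}f(x),
\]
and then invokes the $L^p$ boundedness of the Hardy--Littlewood maximal function for $1<p\le\infty$. Your weighted Schur test with $h(x)=\la x\ra^{-s}$, $0<s<4/p'$, is an equally valid and arguably more self-contained route: it avoids appealing to maximal function theory and makes transparent exactly why $p=1$ is excluded (no admissible $s$ exists when $p'=\infty$). The maximal function argument is shorter and gives the weak $(1,1)$ bound for free, though that is not needed here. You also supply an explicit $L^1$ counterexample, which the paper omits; this is a welcome addition since the statement does assert that $L^1$ boundedness can fail.
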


\begin{lemma} \label{lem:ybig}
	Suppose $K(x,y)$ is an integral kernel supported in the region of $\R^8$ where $|y| > 2|x|$, 
	and $|K(x,y)| \les \la x\ra^{-\gamma} \la y\ra^{-\beta}$ for some $0 < \beta \leq 4$ and
	$\gamma \geq 4-\beta$.
	Then $K$ defines a bounded operator on $L^p(\R^4)$ for all $1 \leq p < \frac{4}{4-\beta}$.
\end{lemma}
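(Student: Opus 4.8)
The plan is to prove the $L^p$ bound by treating the endpoint $p=1$ by hand and by obtaining the open range $1<p<\frac{4}{4-\beta}$ from a weak-type estimate together with Marcinkiewicz interpolation. Write $T$ for the operator with kernel $K$, so that $Tf(x)=\int_{\R^4}K(x,y)f(y)\,dy$, where $K$ is supported in $\{|y|>2|x|\}$ with $|K(x,y)|\les \la x\ra^{-\gamma}\la y\ra^{-\beta}$ there, $0<\beta\le 4$, and $\gamma\ge 4-\beta$.

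For $p=1$, Fubini's theorem reduces the claim to the bound $\sup_y \int_{\R^4}|K(x,y)|\,dx<\infty$. Because the support confines the $x$-integration to the ball $\{|x|<|y|/2\}$, one uses $\int_{|x|<|y|/2}\la x\ra^{-\gamma}\,dx\les \la y\ra^{\max(4-\gamma,\,0)}$, with an additional factor $\log\la y\ra$ in the case $\gamma=4$; multiplying by $\la y\ra^{-\beta}$ then produces a quantity that is bounded uniformly in $y$ precisely because $\beta>0$ and $\gamma+\beta\ge 4$ (the subcase $\gamma<4$ uses $4-\gamma-\beta\le 0$, the subcase $\gamma=4$ uses $\beta>0$ to absorb the logarithm).

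For $1<q<\frac{4}{4-\beta}$ — which is exactly the condition $\beta q'>4$ — the integral $\int_{|y|>2|x|}\la y\ra^{-\beta q'}\,dy$ converges and is $\les \la x\ra^{4-\beta q'}$, so Hölder's inequality in the $y$ variable gives the pointwise bound $|Tf(x)|\les \la x\ra^{-\gamma+4/q'-\beta}\|f\|_{L^q}\les \la x\ra^{-4/q}\|f\|_{L^q}$, the last inequality using $\la x\ra\ge 1$ together with $\gamma+\beta\ge 4$. Since $\la\cdot\ra^{-4/q}$ lies in $L^{q,\infty}(\R^4)$, this shows that $T$ maps $L^q(\R^4)$ boundedly into $L^{q,\infty}(\R^4)$ for every such $q$. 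Given any $p$ with $1<p<\frac{4}{4-\beta}$, choosing $q_0,q_1$ with $1<q_0<p<q_1<\frac{4}{4-\beta}$ and applying the Marcinkiewicz interpolation theorem to the weak-type bounds at $q_0$ and $q_1$ yields the strong-type bound $T:L^p(\R^4)\to L^p(\R^4)$.

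The main obstacle is the borderline case $\gamma=4-\beta$: there the pointwise estimate is only $|Tf(x)|\les \la x\ra^{-4/q}\|f\|_{L^q}$, and $\la\cdot\ra^{-4/q}$ belongs to $L^{q,\infty}(\R^4)$ but not to $L^q(\R^4)$, so one is genuinely obliged to pass through weak-type spaces rather than estimate $\|Tf\|_{L^q}$ in one stroke — when $\gamma+\beta>4$ strictly, by contrast, the pointwise bound already lands $Tf$ in $L^q(\R^4)$ and no interpolation is needed. An alternative to the last two paragraphs would be a Schur test with power weights $\la x\ra^{-a}$ and $\la y\ra^{-a}$, with $a$ chosen in the interval $\big(\tfrac{4-\beta}{q'},\ \tfrac{\beta}{q}\big)$, which is nonempty precisely when $\beta q'>4$ and handles the open range $1<p<\frac{4}{4-\beta}$ uniformly; I would keep whichever version is shorter in the final write-up.
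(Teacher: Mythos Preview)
Your argument is correct. The paper's proof shares the same $p=1$ step and also finishes by interpolation, but organizes the other endpoint differently: it reduces to the borderline $\gamma=4-\beta$, observes that for each fixed $x$ the function $y\mapsto K(x,y)$ has weak-$L^{4/\beta}$ norm $\les \la x\ra^{\beta-4}$, and from this deduces a single Lorentz-space bound $T:L^{\frac{4}{4-\beta},1}\to L^{\frac{4}{4-\beta},\infty}$, then interpolates (citing Hunt) between this endpoint and the strong $L^1$ bound. Your route instead produces a continuum of weak-type $(q,q)$ bounds inside the open range via H\"older in $y$, and then applies Marcinkiewicz between two such $q$'s. Both are short; the paper's version is marginally more economical (one endpoint estimate rather than a family) and makes the critical exponent $\frac{4}{4-\beta}$ visible as a genuine Lorentz endpoint, whereas your version avoids the Lorentz-space duality step and the separate treatment of $\beta=4$. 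Your weighted Schur-test alternative is also fine and would give the strong $L^p$ bound directly without interpolation.
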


\begin{lemma} \label{lem:xapproxy}
	Suppose $K(x,y)$ is an integral kernel supported in the region of $\R^8$ where $\frac12|x| \leq |y| \leq 2|x|$,
	and $|K(x,y)| \les \la x\ra^{-3-\alpha} \la|x| - |y|\ra^{-1-\beta}$ for some $\alpha, \beta \geq 0$.
	\begin{enumerate}[i)]
		
		\item If $\alpha+\beta > 0$, then $K$ defines a bounded operator on $L^p(\R^n)$ for all $1 \leq p \leq \infty$.
		
		\item If $\alpha = \beta = 0$, it is not guaranteed that the operator is bounded on $L^p(\R^4)$ for any $p$.
		
	\end{enumerate}

\end{lemma}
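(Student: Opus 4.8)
The plan is to reduce the $L^p$ boundedness of $K$ to a convolution-type estimate in the radial variable, exploiting the localization $\frac12|x|\le|y|\le2|x|$ so that $\la x\ra\approx\la y\ra$ throughout. First I would pass to polar coordinates, writing $x=r\omega$, $y=s\theta$ with $r,s>0$ and $\omega,\theta\in S^3$. The angular integrations are harmless: the bound $|K(x,y)|\les\la x\ra^{-3-\alpha}\la|x|-|y|\ra^{-1-\beta}$ has no angular dependence, so after integrating out $\theta$ (picking up the surface measure $s^3\,d\theta\approx r^3\,d\theta$ on the support) the problem becomes the $L^p(r^3\,dr)$ boundedness of the scalar operator with kernel $\widetilde K(r,s)=\la r\ra^{-3-\alpha}\la r-s\ra^{-1-\beta}$ acting on functions of $r\in(0,\infty)$, restricted to $\frac12 r\le s\le 2r$. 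Since on this region $r^3\approx s^3$, the weight is essentially symmetric and can be folded into the functions by setting $f(r)=r^{3/p}g(r)$; equivalently one checks the Schur-type criterion directly with the measure $r^3\,dr$.

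Next, to prove the $L^p$ bound for $1\le p\le\infty$ in case (i), I would verify the two Schur integrability conditions: $\sup_r \int_{\frac12 r}^{2r} |\widetilde K(r,s)|\,s^3\,ds \les 1$ and $\sup_s \int_{\frac12 s}^{2s}|\widetilde K(r,s)|\,r^3\,dr\les 1$, which together yield boundedness on all $L^p(r^3\,dr)$, $1\le p\le\infty$, by Schur's test and interpolation (indeed Schur's test gives $L^2$ and the two one-sided bounds give $L^1$ and $L^\infty$). For the first, fix $r$; on the support $s\approx r$ so $\la r-s\ra\le \la r\ra$ and we get
\begin{align*}
\int_{\frac12 r}^{2r}\la r\ra^{-3-\alpha}\la r-s\ra^{-1-\beta}s^3\,ds
\les \la r\ra^{-\alpha}\int_{-r}^{r}\la t\ra^{-1-\beta}\,dt
\les \la r\ra^{-\alpha}\big(1+\log\la r\ra\,\mathbf 1_{\beta=0}+\la r\ra^{-\beta}\cdot 0\big),
\end{align*}
and since $\alpha+\beta>0$ this is uniformly bounded (if $\beta>0$ the $t$-integral converges; if $\beta=0$ then $\alpha>0$ absorbs the $\log$). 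The second condition is symmetric after noting $\la r\ra^{-3-\alpha}r^3\approx\la s\ra^{-\alpha}$ on the support. For case (ii), $\alpha=\beta=0$, the first Schur integral produces $\log\la r\ra$, which is unbounded, and one exhibits a logarithmically divergent test function (e.g.\ a truncated $g(r)=r^{-3/p}(\log r)^{-1}\mathbf 1_{[2,R]}$) to show no $L^p$ bound can hold; this is the content of the negative statement.

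The main obstacle I anticipate is purely bookkeeping rather than conceptual: keeping the polar-coordinate Jacobians and the equivalence $r\approx s$ straight so that the effective one-dimensional kernel really is the symmetric $\la r-s\ra^{-1-\beta}$ against the flat measure $dt$, and handling the borderline $\beta=0$ vs.\ $\alpha=0$ split cleanly so that the single hypothesis $\alpha+\beta>0$ suffices in all cases. A secondary subtlety is the behavior near $r=0$: there $\la r\ra\approx 1$ and the kernel is $\les 1$ on a bounded region, so that contribution is trivially bounded on every $L^p$, but it should be separated out at the start so the main estimate only concerns $r\gtrsim 1$. With those points dispatched, the lemma follows from Schur's test and interpolation as above.
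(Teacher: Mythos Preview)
Your approach is essentially the same as the paper's: both arguments rest on the Schur test and a radial (spherical) integration that reduces $\int_{|y|\approx|x|}\la|x|-|y|\ra^{-1-\beta}\,dy$ to the one-dimensional integral $\la x\ra^3\int_{|x|/2}^{2|x|}\la|x|-r\ra^{-1-\beta}\,dr$, followed by the same $\beta>0$ versus $\beta=0$ case split. The paper simply carries this out directly in $\R^4$ rather than first passing to an abstract one-dimensional operator on $L^p(r^3\,dr)$; your reduction is correct but adds bookkeeping (the $f(r)=r^{3/p}g(r)$ substitution, separate treatment of small $r$) that is not needed.

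For part (ii) the paper's counterexample is considerably simpler than your proposed test function: take $f$ to be the characteristic function of the annulus $R<|y|<2R$; then for any $x$ in the same annulus $\int|K(x,y)|f(y)\,dy\gtrsim\log\la R\ra$ (with $K$ equal to its upper bound), so $\|Kf\|_{L^p}/\|f\|_{L^p}\gtrsim\log R$ for every $p$. Your function $g(r)=r^{-3/p}(\log r)^{-1}\mathbf{1}_{[2,R]}$ may also work but would require additional computation to verify; the characteristic function is the cleaner choice here.
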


The proofs of Lemmas~\ref{lem:xbig}-\ref{lem:xapproxy} are technical and are provided in Appendix~\ref{sec:appA}.
There are two main factors which control the integrability and size of~\eqref{eqn:Ws22}.
The integral in $\lambda$ is highly oscillatory due to the presence of Bessel functions in the
formula for the free resolvents $R_0^\pm(\lambda^2)$.  In the $w$ and $z$ variables,
decay of the potential $V$ and the eigenfunctions $\psi$ effectively localize most integrals to a 
neighborhood of the origin.  A representative example of each kind of estimate are as follows.

\begin{lemma}[\cite{GGwaveop}, Lemma 2.2] \label{lem:lambdaInt}
	Let $R_0^\pm(\lambda^2,A)$ denote the convolution kernel of $R_0^\pm(\lambda^2)$
	evaluated at a point with $|x-y| = A$.  For each $j \geq 0$,
	\begin{equation}
	\int_0^\infty R_0^+(\lambda^2, A)\partial_B^j\big(R_0^+ - R_0^-\big)(\lambda^2,B)
	\lambda^{-1}\tilde\Phi(\lambda)\, d\lambda
	\les \frac{1}{A^{2}\la A+B\ra \la A-B\ra^{1+j}}.
	\end{equation}
\end{lemma}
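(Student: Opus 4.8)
The plan is to reduce the left-hand side to a finite sum of one-dimensional oscillatory integrals in $\lambda$ with linear phases $\lambda(A\pm B)$ and symbol-type amplitudes, and then to read off the claimed decay by combining direct size estimates on short subintervals with repeated integration by parts on the rest.

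First I would recall the explicit structure of the four-dimensional free resolvent. With $r=|x-y|$, $R_0^\pm(\lambda^2,r)$ is, up to a constant, $\frac{\lambda}{r}H_1^\pm(\lambda r)$, where $H_1^\pm$ are the Hankel functions of order one; their small- and large-argument asymptotics give the familiar split: for $\lambda r\les 1$, $R_0^\pm(\lambda^2,r)=r^{-2}\big(a_0+O\big((\lambda r)^2\log(\lambda r)\big)\big)$, which is essentially non-oscillatory; for $\lambda r\gtrsim 1$, $R_0^\pm(\lambda^2,r)=r^{-2}(\lambda r)^{1/2}e^{\pm i\lambda r}\sigma_\pm(\lambda r)$ with $\sigma_\pm$ a symbol of order $0$. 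The subtracted kernel $(R_0^+-R_0^-)(\lambda^2,r)$ is, up to a constant, $\frac{\lambda}{r}J_1(\lambda r)=r^{-2}G_0(\lambda r)$ with $G_0(s)=sJ_1(s)$ entire in $\lambda$ and vanishing to second order at $s=0$, and for $\lambda r\gtrsim 1$ it takes the form $r^{-2}(\lambda r)^{1/2}\big(e^{i\lambda r}\tau_+(\lambda r)+e^{-i\lambda r}\tau_-(\lambda r)\big)$ with $\tau_\pm$ of order $0$. Applying $\partial_B^j$ — by Leibniz on the $r^{-2}$ factor and the chain rule on the rest — turns $\partial_B^j(R_0^+-R_0^-)(\lambda^2,B)$ into a finite sum of terms carrying a prefactor $B^{-2-a}$ (with $a\leq j$) times $\lambda^{b}$ (with $a+b=j$) times a derivative of $G_0$ or its oscillatory high-energy form; because $G_0$ vanishes quadratically at the origin, the emerging negative powers of $B$ are compensated when $B$ is small.

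Substituting these and expanding the product reduces the integral to a finite sum — over the two relative signs $\pm$, over $a+b=j$, and over whether each of $\lambda A$ and $\lambda B$ is $\les 1$ or $\gtrsim 1$ — of scalar integrals of the schematic shape
\[
\frac{c}{A^2B^{2+a}}\int \lambda^{b-1}\,e^{i\lambda(A\pm B)}\,m(\lambda A,\lambda B)\,\tilde\Phi(\lambda)\,d\lambda ,
\]
with the exponential replaced by $e^{\pm i\lambda A}$ or by $1$ on the subintervals where the $B$- or the $A$-factor, respectively, is in its non-oscillatory regime, and with $m$ a symbol-type amplitude carrying the appropriate powers of $\lambda A$ and $\lambda B$. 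On the pieces where a factor is non-oscillatory one estimates it by its size, using the vanishing recorded above; on the pieces where a phase is genuinely non-stationary — which forces $A+B$ (for the $+$ sign) or $|A-B|$ (for the $-$ sign) to be large — one integrates by parts in $\lambda$, each step producing a factor $\la A+B\ra^{-1}$ respectively $\la A-B\ra^{-1}$, while the symbol structure of $m$ and $\tilde\Phi$ keeps the differentiated amplitude under control and the boundary terms at the cut points are absorbed into the direct estimates. Iterating to extract $\la A-B\ra^{-1-j}$, and then using $\la A-B\ra\leq\la A+B\ra$ together with the $A^{-2}B^{-2-a}$ prefactor to supply the remaining $\la A+B\ra^{-1}$, and finally summing over all pieces, yields the stated bound, with the $A^{-2}$ coming straight from the resolvent prefactor. (Alternatively, one may quote a standard oscillatory-integral lemma for one-dimensional integrals of this type.)

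The delicate step, and the main obstacle, is the endpoint bookkeeping: one must simultaneously track the $(\lambda A)^{1/2}$- and $(\lambda B)^{1/2}$-type growth of the high-energy resolvent amplitudes, the logarithmic corrections in the low-energy expansion, and the negative powers of $B$ released by $\partial_B^j$, and on each subinterval decide whether and how often to integrate by parts. Estimates that both extract the $(\lambda A)^{1/2}(\lambda B)^{1/2}$ growth and integrate by parts the maximal number of times are too lossy when $A\approx B$; the argument must balance these two devices against one another so that precisely the exponents $1$ and $1+j$ survive in $\la A+B\ra$ and $\la A-B\ra$, with no residual power of $B$ and no spurious logarithm. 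The oscillatory-integral mechanics themselves are routine.
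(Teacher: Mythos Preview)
Your approach is essentially the same as the paper's (which cites \cite{GGwaveop} for the full proof): the decomposition of $R_0^\pm(\lambda^2,r)$ into a compactly supported low-frequency piece and an oscillatory $r^{-2}(\lambda r)^{1/2}e^{\pm i\lambda r}$ piece is exactly the $\Omega/\Psi_{1/2}$ splitting in~\eqref{eqn:asymptotics1}--\eqref{eqn:R+minusR-}, and the integration-by-parts mechanism you describe is precisely what is packaged as Lemma~\ref{lem:IBP}. Your remark that one ``may quote a standard oscillatory-integral lemma'' is in fact how the paper proceeds.
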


Two variations of this bound, with different powers of $\lambda$ in the integrand and different placement of the partial
derivatives, will also be required.  These are stated as Lemmas~\ref{lem:logssuck} and~\ref{lem:leftcanc1} in the Appendix.  The
first one is proved in~\cite{GGwaveop} along with Lemma~\ref{lem:lambdaInt}.

\begin{lemma}[\cite{GGwaveop}, Lemma 4.3] \label{lem:A1}
	
	Let $\beta \geq 1$ and $0 \leq \alpha < n-1$.  If $N\geq n+\beta$, 
	then for each fixed constant $R\geq 0$, we have the bound
	$$
	\int_{\R^n}\frac{\la z\ra^{-N}}{|x-z|^\alpha\la |x-z| + R\ra \la |x-z|-R\ra^\beta} \, dz
	\les \frac{1}{\la x\ra^\alpha \la |x| + R\ra \la R-|x| \ra^\beta}.
	$$
	
\end{lemma}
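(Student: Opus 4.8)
The plan is to reduce this convolution bound to one–dimensional radial integrals by a dyadic decomposition, after a change of variables that makes the weight $\la z\ra^{-N}$ concentrate near the singular point of $|x-z|^{-\alpha}$. First I would dispose of the range $|x|\les 1$, where the claimed right–hand side is comparable to $\la R\ra^{-1-\beta}$ and the left–hand side is controlled crudely: $|x-z|^{-\alpha}$ is locally integrable since $\alpha<n$, on $|z|\les 1$ one has $\la|x-z|\pm R\ra\sim\la R\ra$, and for $|z|\gg 1$ one has $|x-z|\sim|z|$, so that $\la z\ra^{-N}$ with $N>n$ finishes the job. Hence we may assume $|x|\gg 1$, so $\la x\ra\sim|x|$. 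Substituting $u=x-z$ turns the integral into
\[
I:=\int_{\R^n}\frac{\la x-u\ra^{-N}}{|u|^\alpha\,\la|u|+R\ra\,\la|u|-R\ra^\beta}\,du ,
\]
and I would split $\R^n=\Omega\cup\Omega^c$ with $\Omega=\{|u-x|\le|x|/2\}$, on which $|u|\sim|x|\sim\la x\ra$.

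On $\Omega$ the factors $|u|^{-\alpha}$ and $\la|u|+R\ra^{-1}$ are comparable to $\la x\ra^{-\alpha}$ and $\la|x|+R\ra^{-1}$, so the bound reduces to $\int_{\Omega}\la x-u\ra^{-N}\la|u|-R\ra^{-\beta}\,du\les\la|x|-R\ra^{-\beta}$. If $R\notin[\tfrac14|x|,4|x|]$ this is immediate because then $\la|u|-R\ra\sim\la|x|-R\ra$ throughout $\Omega$; so assume $R\sim|x|$. On the inner piece $\{|u-x|\le c\,\la|x|-R\ra\}$ the triangle inequality gives $\la|u|-R\ra\gtrsim\la|x|-R\ra$ and $\int\la x-u\ra^{-N}\,du\les 1$. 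On the dyadic shells $S_k=\{|u-x|\sim 2^k\}$ with $2^k\gtrsim\la|x|-R\ra$, the weight is $\sim 2^{-kN}$, and foliating $S_k$ by the level sets of $|u|$ — which are caps of radius $\sim 2^k$ on spheres of radius $\sim|x|$, hence of surface measure $\les 2^{k(n-1)}$ — together with $\beta\ge1$ gives $\int_{S_k}\la|u|-R\ra^{-\beta}\,du\les 2^{k(n-1)}(1+k)$. Summing $\sum_{2^k\gtrsim\la|x|-R\ra}2^{k(n-1-N)}(1+k)$: since $n-1-N\le-1-\beta$ the series is geometric and dominated by its first term, so $I|_{\Omega}\les\la|x|-R\ra^{\,n-1-N}(1+\log\la|x|-R\ra)\les\la|x|-R\ra^{-\beta}$, the last step using $N\ge n+\beta$ and $\la t\ra^{-1}\log\la t\ra\les 1$.

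On $\Omega^c=\{|u-x|>|x|/2\}$ one exploits the strong decay $\la x-u\ra^{-N}\les|x|^{-N}$ available on $\{|u|\le|x|/4\}$. When $R>2|x|$ the weight forces $|u|\sim|x|\ll R$ on the bulk, so $\la|u|\pm R\ra\sim R\sim\la|x|\pm R\ra$, and it remains to check $\int_{\Omega^c}|u|^{-\alpha}\la x-u\ra^{-N}\,du\les\la x\ra^{-\alpha}$, which follows by splitting at $|u|\sim|x|$ and using $n-N<0$. When $R\le 2|x|$ one runs an analogous dyadic decomposition in $|u|$, retaining the $\la|u|-R\ra^{-\beta}$ factor and invoking $N\ge n+\beta$, together with the angular integrability $\int_{S^{n-1}}|x-\rho\omega|^{-\alpha}\,d\omega\les\max(\rho,|x|)^{-\alpha}$ (valid precisely because $\alpha<n-1$), to control each shell. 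Assembling the pieces yields $I\les\la x\ra^{-\alpha}\la|x|+R\ra^{-1}\la|x|-R\ra^{-\beta}$.

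\textbf{Main obstacle.} The delicate point is the borderline case $N=n+\beta$ with $\beta=1$. Any estimate that discards the spherical weight $\la|u|-R\ra^{-\beta}$, or replaces the cap measure $2^{k(n-1)}$ of a level set inside $S_k$ by the full shell measure $2^{kn}$, loses a logarithm and just fails to close. The resolution is that the shells nearest to $x$ — those with $2^k\sim\la|x|-R\ra$ — dominate the sum, and for them the one–power gain from the thinness of the level sets exactly absorbs the logarithm coming from $\int\la s\ra^{-1}\,ds$. Correspondingly the argument must carry the $R$–dependence throughout and never bound $\la|u|+R\ra^{-1}$ or $\la|u|-R\ra^{-\beta}$ by $1$ prematurely; making sure that each of $\alpha<n-1$, $\beta\ge1$, and $N\ge n+\beta$ is used exactly where needed is where most of the care goes.
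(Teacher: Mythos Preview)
This lemma is not proved in the present paper; it is quoted directly as Lemma~4.3 of \cite{GGwaveop} and used as a black box, so there is no in-paper argument to compare against.  Your strategy --- substitute $u=x-z$, split into $\Omega=\{|u-x|\le|x|/2\}$ and its complement, and on $\Omega$ foliate the dyadic shells $S_k$ by the level sets of $|u|$ to exploit the thinness of spherical caps --- is sound.  In particular your handling of the borderline case $N=n+\beta$, $\beta=1$ is correct: the cap measure $\les 2^{k(n-1)}$ together with the one-dimensional bound $\int\la\rho-R\ra^{-\beta}\,d\rho\les 1+k$ over an interval of length $\sim 2^k$ produces a geometrically convergent sum whose first term absorbs the logarithm.

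There is one slip in the $\Omega^c$ sketch.  After the substitution, on a sphere $\{|u|=\rho\}$ the factors $|u|^{-\alpha}$, $\la|u|+R\ra^{-1}$ and $\la|u|-R\ra^{-\beta}$ are constant; the only angularly varying factor is $\la x-u\ra^{-N}$.  The relevant angular bound is therefore $\int_{\{\omega:\,|x-\rho\omega|>|x|/2\}}\la x-\rho\omega\ra^{-N}\,d\omega\les\max(\rho,|x|)^{-N}$, which is immediate on $\Omega^c$ because the possible singularity at $\rho\omega=x$ has been excised.  The bound you wrote, $\int_{S^{n-1}}|x-\rho\omega|^{-\alpha}\,d\omega\les\max(\rho,|x|)^{-\alpha}$, is a true fact requiring $\alpha<n-1$, but it is not the estimate that controls the shell here.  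With this correction the $\Omega^c$ case closes using only $\alpha<n$ and $N\ge n+\beta$; as written, your argument never actually engages the hypothesis $\alpha<n-1$.  That should prompt a check: either your full writeup uses it somewhere you have not shown, or the constraint in the stated lemma is a non-sharp artifact of the proof method in \cite{GGwaveop} (the companion Lemma~\ref{lem:largew} in the appendix carries the same restriction for exactly this reason).
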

The $\la z\ra^{-N}$ decay in the numerator of Lemma~\ref{lem:A1} is achieved by the
combined decay of the potential $V(z)$ and  eigenfunctions $\psi(z)$.
Eigenfunctions at zero energy have a characteristic rate of decay which comes
from the Green's function of the Laplacian and the additional cancellation property $P_eV1=0$.
\begin{lemma}\label{lem:efn decay}
	
	If $|V(x)|\les \la x\ra^{-3- }$, and
	$\psi$ is a zero energy eigenfunction, then
	$|\psi(x)|\les \la x\ra^{-3}$.
	
\end{lemma}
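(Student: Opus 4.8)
The plan is to bootstrap decay from the integral equation $\psi = -R_0(0)(V\psi)$, where $R_0(0)$ denotes convolution against the Green's function $c_4|x-y|^{-2}$ of $-\Delta$ on $\R^4$, and then to use the cancellation $\int_{\R^4}V(y)\psi(y)\,dy=0$ from \eqref{eqn:PV1} to gain one power of decay past the generic rate $\la x\ra^{-2}$. Write $|V(x)|\les\la x\ra^{-3-\epsilon}$ for some $\epsilon\in(0,1)$ (we may shrink $\epsilon$ at will). Since $\psi\in L^2(\R^4)$ and $\la\cdot\ra^{-3-\epsilon}\in L^2(\R^4)$, Cauchy--Schwarz gives $V\psi\in L^1(\R^4)$, and $V\psi\in L^2(\R^4)$ because $V$ is bounded. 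From $H\psi=0$ we have $-\Delta\psi=-V\psi$, and the standard uniqueness argument (the difference $\psi+R_0(0)(V\psi)$ is harmonic and vanishes at infinity, hence is identically zero) yields the integral equation. Applying it once, via Hardy--Littlewood--Sobolev, gives $\psi\in L^4(\R^4)$; feeding this back produces $V\psi\in L^{2+\delta}(\R^4)$ for some small $\delta>0$, after which splitting the kernel as $|x-y|^{-2}\mathbf{1}_{|x-y|\le1}+|x-y|^{-2}\mathbf{1}_{|x-y|>1}$ and applying Young's inequality to each piece gives $\psi\in L^\infty(\R^4)$. (Alternatively one may invoke elliptic regularity, which also yields continuity of $\psi$.)

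The next step is to iterate pointwise decay using the elementary convolution bound: for $0<a<4$ and $a+b>4$, one has $\int_{\R^4}|x-y|^{-a}\la y\ra^{-b}\,dy\les\la x\ra^{-\min(a,\,a+b-4)}$, with no logarithmic loss as long as none of $a$, $b$, $a+b$ equals $4$; this follows by decomposing the $y$-integral into $|y|<\tfrac12|x|$, $|x-y|<\tfrac12|x|$, and the complement. Starting from $|\psi|\les1$ we get $|V\psi|\les\la y\ra^{-3-\epsilon}$, hence $|\psi(x)|\les\la x\ra^{-\min(2,\,1+\epsilon)}$; a second iteration gives $|V\psi|\les\la y\ra^{-4-2\epsilon}$, which is integrable, whence $|\psi(x)|\les\la x\ra^{-2}$. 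At this stage the bare kernel permits no further gain, since the dominant contribution to $|x-y|^{-2}*(V\psi)$ is $\approx\la x\ra^{-2}\int_{\R^4}V\psi$.

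The final power comes from subtracting exactly that contribution, which is legitimate precisely because $P_eV1=0$. Writing
\[
\psi(x)=-c_4\int_{\R^4}\Big(\frac{1}{|x-y|^2}-\frac{1}{|x|^2}\Big)V(y)\psi(y)\,dy,
\]
I would split at $|y|=\tfrac12|x|$. On $|y|\le\tfrac12|x|$, the bound $\big||x-y|^{-2}-|x|^{-2}\big|\les|y|\,|x|^{-3}$ together with $|V\psi|\les\la y\ra^{-5-\epsilon}$ (using $|\psi|\les\la y\ra^{-2}$) gives a contribution $\les|x|^{-3}$, since $|y|\la y\ra^{-5-\epsilon}$ is integrable on $\R^4$. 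On $|y|>\tfrac12|x|$ the two terms are estimated separately: the $|x|^{-2}$ term contributes $\les|x|^{-2}\int_{|y|>|x|/2}\la y\ra^{-5-\epsilon}\,dy\les|x|^{-3-\epsilon}$, and the $|x-y|^{-2}$ term, split further according to whether $|x-y|$ is smaller or larger than $\tfrac14|x|$, contributes $\les|x|^{-3-\epsilon}$ as well. Combined with $\psi\in L^\infty(\R^4)$, this yields $|\psi(x)|\les\la x\ra^{-3}$.

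The \emph{main obstacle} is essentially careful bookkeeping in the convolution estimates: arranging the region decompositions so that no logarithmic losses appear at borderline exponents, and checking that the hypothesis $|V|\les\la\cdot\ra^{-3-}$ is exactly what each step requires (in particular that $|y|\la y\ra^{-5-\epsilon}$, and not merely $|y|\la y\ra^{-5}$, is integrable on $\R^4$, which is why the bare rate $-3$ on $V$ would not suffice). The analytic inputs — the integral equation, the $L^\infty$ bound, and the cancellation $P_eV1=0$ — are all standard, so the real content is the two-step decay bootstrap sharpened by \eqref{eqn:PV1}.
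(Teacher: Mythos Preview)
Your proof is correct and follows essentially the same approach as the paper: both use the cancellation $P_eV1=0$ to replace $|x-y|^{-2}$ by $|x-y|^{-2}-|x|^{-2}$, then exploit the mean-value bound $\big||x-y|^{-2}-|x|^{-2}\big|\les |y|\,|x|^{-3}$ on $|y|<\tfrac12|x|$ and treat the far region directly. The only difference is that you supply a self-contained bootstrap (through $L^\infty$, then iterated pointwise decay) to reach the a~priori bound $|\psi(x)|\les\la x\ra^{-2}$, whereas the paper simply imports this from \cite[Lemma~2.1]{GGwaveop}.
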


\begin{proof}
	By definition, every eigenfunction $\psi(x)$ satisfies $\Delta \psi = V\psi$.
	After applying the Green's function of the Laplacian to both sides,
	$\psi(x) = C ( |x|^{-2}* V\psi)$.
	Using $P_eV1=0$ allows us to write
	$$
	\psi(x) = C\int_{\R^4} V\psi(y)\Big(\frac{1}{|x-y|^2} - \frac{1}{|x|^2}\Big)\, dy.
	$$
	When $|y| < \frac12|x|$, we have the bound $\big||x-y|^{-2} - |x|^{-2} \big|
	\les |y|\,|x|^{-3}$, hence
	\begin{multline*}
	|\psi(x)| \les \int_{|y| < \frac12|x|} \frac{|y|\,|V\psi(y)|}{|x|^3}\,dy
	+ \int_{|x-y| < \frac12|x|} \frac{|V\psi(y)|}{|x-y|^2}\,dy\\
	+ \int_{|y| > \frac12|x|} \frac{|V\psi(y)|}{|x|^2}\,dy
	\les \frac{1}{|x|^3},
	\end{multline*}
	provided $|V\psi(y)| \les \la y\ra^{-5-}$.  Since we have the {\it a priori}
	estimate that $|\psi(x)| \les \la x\ra^{-2}$, see Lemma~2.1 in \cite{GGwaveop}, it suffices to assume that $|V(x)|
	\les \la x\ra ^{-3-}$.
\end{proof}


\section{Main Estimates for $K(x,y)$} \label{sec:main}
The technical tools required to prove Proposition~\ref{prop:PV1} are somewhat different depending on the relative size of $|x|$ and $|y|$ in the integral kernel $K(x,y)$.  In order to employ Lemmas~\ref{lem:xbig}-\ref{lem:xapproxy}, we proceed by making separate estimates where $|y|$ is greater than, approximately equal to, or smaller than $|x|$.  The region where $|y| > 2|x|$ plays a key role in distinguishing the two cases of Proposition~\ref{prop:PV1}.  In the following subsections we provide the necessary bounds on the integral kernel of $W_s$ to verify both claims in Proposition~\ref{prop:PV1} and formulate a similar proposition about the operator $W_{log}$ defined in \eqref{eqn:Wlog}.  Once these facts are in hand, Theorem~\ref{thm:main} is an immediate consequence.

\subsection{Estimates when $|y| > 2|x|$}
We begin the analysis of $K(x,y)$ in the region where $y$ is large compared to $x$.
As suggested by Lemma~\ref{lem:ybig}, the decay as $|y| \to \infty$
dictates the upper range of exponents for which $W_s$ is bounded on $L^p(\R^4)$.
Our use of the cancellation condition $P_eV1= 0$ (which always holds, see \eqref{eqn:PV1}), and (when it is assumed) $P_eVx=0$
mirrors its treatment in~\cite{GGwaveop}.

We first rewrite the 
$K(x,y)$ integral in the following manner.
\begin{multline}	\label{eqn:KjkPV1}
K(x,y) = \iint_{\R^{8}} \int_0^\infty V\psi(z) V\psi(w)
R_0^+(\lambda^2,|x-z|) \\
\big((R_0^+ - R_0^-)(\lambda^2, |y-w|) - (R_0^+ - R_0^-)(\lambda^2,|y|)\big)
\frac{\tilde\Phi(\lambda)}{\lambda} \,d\lambda \, dz\, dw.
\end{multline}
Subtracting $(R_0^+ - R_0^-)(\lambda^2,|y|)$, which is independent of $w$,
from the integrand does not affect the final value
due to \eqref{eqn:PV1}.

The strong decay of $V\psi(z)$ and $V\psi(w)$, together with compact support of $\tilde{\Phi}(\lambda)$ and 
boundedness as $\lambda \to 0$ (due to~\eqref{eqn:R+minusR-}) allows the order of integration
to be changed freely.

For any function $F(\lambda, |y|)$ one can express
\begin{equation}\label{eqn:Taylor1}
F(\lambda, |y-w|) - F(\lambda,|y|) = \int_0^1 \partial_r F(\lambda, |y-sw|)
\frac{(-w) \cdot (y-sw)}{|y-sw|}\,ds.
\end{equation}
where $\partial_r$ indicates the partial derivative with respect to the radial variable of
$F(\lambda, r)$.
Here we are interested in $F(\lambda, |y|)
= (R_0^+ - R_0^-)(\lambda^2,|y|)$,  
whose radial derivatives are considered in the statement of Lemma~\ref{lem:lambdaInt}.
Identity \eqref{eqn:Taylor1} is most useful in the region of~\eqref{eqn:KjkPV1}
where $|w| < \frac12|y|$.  In this region we use the righthand side of~\eqref{eqn:Taylor1}
to express the contribution to $K(x,y)$ as
\begin{multline} \label{eqn:K s}
\int_{|w| < \frac{|y|}{2}} \int_{\R^4} \int_0^1 \int_0^\infty 
V\psi(z) V\psi(w)
R_0^+(\lambda^2,|x-z|) \\\partial_r\big((R_0^+ -R_0^-)(\lambda^2,|y-sw|)\big)
\frac{(-w)\cdot(y-sw)}{|y-sw|}
\frac{\tilde\Phi(\lambda)}{\lambda}\,ds\,d\lambda \, dz\, dw,
\end{multline}
Applying Fubini's Theorem and then Lemma~\ref{lem:lambdaInt} with $j=1$ we obtain the upper bound
\begin{equation*}
\int_0^1 \int_{|w| < \frac{|y|}{2}} \int_{\R^4} \frac{| V\psi(z)|\, |w V\psi(w)|}{|x-z|^{2}
	\la |x-z| + |y-sw|\ra \la|x-z| - |y-sw|\ra^{ 2}} \,dz\, dw\, ds.
\end{equation*}

By Lemma~\ref{lem:efn decay} and our assumption that $|V(z)| \les \la z\ra^{-4-}$,
we can control the decay of the numerator with $|V\psi(z)| \les \la z\ra^{-7-}$ as our estimate requires that $|w V\psi(w)| \les \la w\ra^{-6-}$.  These are sufficient to apply
Lemma~\ref{lem:A1} in the $z$ variable, then Lemma~\ref{lem:B1} in the $w$ variable to obtain
\begin{multline}\label{eqn:Kbound1}
|\eqref{eqn:K s}| \les \int_0^1 \int_{|w| < \frac{|y|}{2}}  \frac{|w V\psi(w)|}{\la x\ra^{2}\la |x| + |y-sw|\ra
	\la|x| - |y-sw|\ra^{ 2}} \,dw\, ds    \\
\les \int_0^1 \frac{1}{\la x\ra^{ 2}\la|x|+|y|\ra \la |x| - |y|\ra^{ 2}}\,ds 
\les \frac{1}{\la x\ra^{ 2}\la|x|+|y|\ra \la |x| - |y|\ra^{ 2}}. 
\end{multline}
In the region where $|y| > 2|x|$, this yields $|\eqref{eqn:K s}| \les \la x\ra^{-2} \la y\ra^{-3}$.

For the portion of~\eqref{eqn:KjkPV1} where $|w| > \frac12|y|$, 
we do not seek out cancellation between $(R_0^+-R_0^-)(\lambda^2,|y-w|)$ and 
$(R_0^+ - R_0^-)(\lambda^2,|y|)$ and instead treat the two
terms separately.  For the term with $(R_0^+ - R_0^-)(\lambda^2,|y-w|)$,
we have
\begin{multline}
\int_{\R^4}\int_{|w| > \frac{|y|}{2}} \int_0^\infty 
R_0^+(\lambda^2,|x-z|) V\psi(z) V\psi(w)\\ (R_0^+ - R_0^-)(\lambda^2,|y-w|)
\frac{\tilde\Phi(\lambda)}{\lambda} \,d\lambda \, dw\, dz    \\
\les \int_{\R^4} \int_{|w|> \frac{|y|}{2}} \frac{|V\psi(z)|\, |V\psi(w)|}{
	|x-z|^{ 2}\la|x-z| + |y-w|\ra \la|x-z| -|y-w|\ra } \, dw\, dz    \\
\les \frac{1}{\la x\ra^{ 2} \la |x| + |y| \ra \la |x| - |y|\ra \la y\ra}. \label{eqn:Kbound2}
\end{multline}
The first inequality is Lemma~\ref{lem:lambdaInt} with $j=0$.  The second is a combination of
Lemma~\ref{lem:largew} with $k=1$ and $\alpha=0$ for the $w$ integral and Lemma~\ref{lem:A1} for the
$z$ integral.  When $|y| > 2|x|$ this is also bounded by $\la x\ra^{-2}\la y\ra^{-3}$.


The estimate for the term with $(R_0^+ - R_0^-)(\lambda^2,|y|)$ is more straightforward, we obtain the same bound as before,
\begin{multline}
\int_{\R^4}\int_{|w| > \frac{|y|}{2}} \int_0^\infty R_0^+(\lambda^2,|x-z|)V\psi(z) V\psi(w)\\
(R_0^+ - R_0^-)(\lambda^2,|y|)
\frac{\tilde\Phi(\lambda)}{\lambda} \,d\lambda \, dw \, dz  \\
\les \int_{\R^n} \int_{|w|> \frac{|y|}{2}} \frac{|V\psi(z)|\, |V\psi(w)|}{
	|x-z|^{2}\la |x-z| + |y|\ra \la|x-z|-|y|\ra } \, dw\, dz     \\
\les \frac{1}{\la x\ra^{ 2} \la |x|+ |y|\ra  \la |x| - |y|\ra \la y\ra} . \label{eqn:Kbound3}
\end{multline}
This term does not vanish because of the restricted domain of the $w$ integral.
We have used Lemma~\ref{lem:A1} in $z$, and 
the  estimate $\int_{|w| > |y|/2} \la w\ra^{-N}\,dw \les \la y\ra^{4-N}$ (for $N>4$) in 
lieu of Lemma~\ref{lem:largew}.  
Put together, the integral bounds \eqref{eqn:Kbound1}-\eqref{eqn:Kbound3} show that where $|y| > 2|x|$
\begin{equation} \label{eqn:Kboundnocanc}
|K(x,y)| \les \frac{1}{\la x\ra^2\la y\ra^3},
\end{equation}
so its contribution to $W_s$ is bounded on $L^p(\R^4)$ for $1\leq p<4$ by Lemma~\ref{lem:ybig} with $\beta=3$ and
$\gamma = 2$.  

\subsection{Improvement when $P_eVx=0$}
If we assume that $P_eVx=0$, this permits us to introduce a linear approximation of
$(R_0^+ - R_0^-)(\lambda^2, |y-w|)$ in the $w$ variable without changing the value of the
integral~\eqref{eqn:KjkPV1}. That is, we have the equality
\begin{align}
\int_0^\infty &R_0^{+}(\lambda^2,|x-z|) V(z)\psi(z)
V(w) \psi(w) (R_0^+-R_0^-)(\lambda^2,|w-y|)
\frac{\tilde\Phi(\lambda)}{\lambda} \, d\lambda]\nn \\
&=\int_0^\infty R_0^{+}(\lambda^2,|x-z|) V\psi(z)
V\psi(w)\label{eqn:2canc}\\ &\Big[(R_0^+-R_0^-)(\lambda^2,|w-y|) -F(\lambda,y) - G(\lambda,y)\frac{w\cdot y}{|y|}\Big]
\frac{\tilde\Phi(\lambda)}{\lambda} d\lambda\nn
\end{align}
for any functions $F(\lambda, y)$ and $G(\lambda, y)$.  
In place of \eqref{eqn:Taylor1}, we utilize the second
level of cancellation to write
\begin{multline}\label{eqn:K2canc}
K(\lambda, |y-w|) - K(\lambda, |y|) + \partial_rK(\lambda,|y|) \frac{w \cdot y}{|y|}  \\
= \int_0^1 (1-s) \bigg[\partial_r^2 K(\lambda, |y-sw|)  \frac{(w \cdot (y-sw))^2}{|y-sw|^2}\\
+ \partial_r K(\lambda, |y-sw|)\Big(\frac{|w|^2}{|y-sw|} - \frac{(w \cdot (y-sw))^2}{|y-sw|^3}\Big)\bigg]\, ds.
\end{multline}
The formula above suggests that we choose
$F(\lambda, y)=(R_0^+-R_0^-)(\lambda^2,|y|)$ and
$G(\lambda,y)=\partial_r (R_0^+-R_0^-)(\lambda^2,|y|)$
in \eqref{eqn:2canc} respectively.

As in the arguments of the previous section, we use the left side of~\eqref{eqn:K2canc}
when $|w| > |y|/2$ because there is no significant cancellation of these three terms.
One can imitate~\eqref{eqn:Kbound2} more or less exactly
to show that they contribute no more than $\la x\ra^{-2} \la y\ra^{-4}$ to the
size of $K(x,y)$. The assumption $|V\psi(w)|\les \la w\ra^{-7-}$ is sufficient to 
apply Lemma~\ref{lem:largew} with $k=2$ instead of $k=1$ as needed.

The portion of $K(x,y)$ originating from the region $|w|<|y|/2$,
consists of new terms of the  form
\begin{multline*}
\int_{|w|<\frac{|y|}{2}}\int_{\R^4} \int_0^\infty \int_0^1 
V\psi(z) V\psi(w)
R_0^+(\lambda^2,|x-z|)\\
\partial_r^j\big((R_0^+ -R_0^-)(\lambda^2,|y-sw|)\big) 
(1-s)\Gamma_j(s,w,y)
\frac{\tilde\Phi(\lambda)}{\lambda}\,ds\,d\lambda \, dz\,dw
\end{multline*}
with $j=1,2$ and
$\Gamma_j(s,w,y)$ denoting
$$
\Gamma_1 = \Big(\frac{|w|^2}{|y-sw|} - \frac{(w \cdot (y-sw))^2}{|y-sw|^3}\Big), \qquad
\textrm{and} \qquad  \Gamma_2 = \frac{(w \cdot (y-sw))^2}{|y-sw|^2}.
$$  
When $|w| < |y|/2$ and $0 \leq s \leq 1$, these factors obey the bounds 
$|\Gamma_1(s,w,y)| \les |y|^{-1}|w|^2$ and $|\Gamma_2(s,w,y)| \leq |w|^2$.
The calculation proceeds in the same manner as the estimate for~\eqref{eqn:K s}, first
using Lemma~\ref{lem:lambdaInt} with $j=1,2$, then
Lemma~\ref{lem:A1} in the $z$ integral and Lemma~\ref{lem:B1} (with $\alpha = 2-j$) 
in the $w$ integral.  
For both terms we have the bound $\la x\ra^{-2} \la y\ra^{-4}$ when $|y|>2|x|$.

In this case the use of Lemma~\ref{lem:B1} with $\beta = 1+j$
requires that $|w|^2|V\psi(w)| \les \la w\ra^{-7}$, from which 
Lemma~\ref{lem:efn decay} shows that $|V(w)|\les \la w\ra^{-6-}$ is needed.

Put together with the previous claim, this implies that if $P_eVx=0$, then
\begin{equation} \label{eqn:Kboundcanc}
|K(x,y)| \les \frac{1}{\la x\ra^{ 2}\la y\ra^{4}} \ \text{when} \ |y| > 2|x|. 
\end{equation} 
The operator with kernel $K(x,y)$ in this region is therefore 
bounded on $L^p(\R^4)$ for all $p \in [1,\infty)$, by Lemma~\ref{lem:ybig}.

\subsection{Estimates when $|x| \approx |y|$}

No new work is required to control $K(x,y)$ adequately when $x$ and $y$ are
of similar size. We need only use the fact that $P_eV1=0$ as before, then combining~\eqref{eqn:Kbound1} and~\eqref{eqn:Kbound2} when $\frac12|x|\leq |y| \leq 2|x|$
leads to the bound
\begin{equation} \label{eqn:KboundRing}
|K(x,y)| \les \frac{1}{\la x\ra^3\la|x|-|y|\ra^2} + \frac{1}{\la x\ra^4\la|x|-|y|\ra}\les \frac{1}{\la x\ra^3\la|x|-|y|\ra^2}.
\end{equation}
This is bounded on $L^p(\R^4)$ for all $1 \leq p \leq \infty$ by Lemma~\ref{lem:xapproxy}.

\begin{rmk}
	In dimensions $n \geq 5$ it is possible to prove adequate bounds for $K(x,y)$ in
	the region where $|x| \approx |y|$ without assuming that $P_eV1 = 0$.  Here
	the cancellation is essential, as a straightforward estimation of~\eqref{eqn:Ws22}
	gives the bound $|K(x,y)| \les \la x\ra^{-3}\la|x|-|y|\ra^{-1}$ in this region,
	which does not lead to operator bounds on $L^p(\R^4)$ for any exponent $p$. 
\end{rmk}

\subsection{Estimates when $|x| > 2|y|$}
In the region where $|x| > 2|y|$ the combined bounds~\eqref{eqn:Kbound1}
and~\eqref{eqn:Kbound2} imply that
\begin{equation} \label{eqn:KboundXnocanc}
|K(x,y)| \les \frac{1}{\la x\ra^5} + \frac{1}{\la x\ra^4 \la y\ra} \les \frac{1}{\la x\ra^4},
\end{equation}
which according to Lemma~\ref{lem:xbig} gives rise to a bounded operator on $L^p(\R^4)$
for all $p>1$.  It is certainly bounded on $L^1(\R^4)$ as well if we restrict to the compact
region where $2|y| < |x| < 1$.

In order to show that the entire kernel of $W_s$ is bounded on $L^1(\R^4)$,
we must improve the decay as
$|x| \to \infty$ enough to make it integrable.  This is accomplished by applying the
cancellation condition $P_eV1=0$  in the $z$ variable in \eqref{eqn:Ws22}.  Then we may write  
\begin{multline}	\label{eqn:Kleftcanc}
K(x,y) = \iint_{\R^{8}} \int_0^\infty V\psi(z) V\psi(w)
\big(R_0^+(\lambda^2,|x-z|) - R_0^+(\lambda^2,|x|)\big) \\
(R_0^+ - R_0^-)(\lambda^2, |y-w|)\big)
\frac{\tilde\Phi(\lambda)}{\lambda} \,d\lambda \, dz\, dw.
\end{multline}
We split this integral into two regions: where $|z| > \frac12|x|$, and 
where $|z| < \frac12|x|$.

The first region is evaluated in the same spirit as~\eqref{eqn:Kbound2}.
\begin{multline*}
\int_{\R^4}\int_{|z| > \frac{|x|}{2}} \int_0^\infty 
R_0^+(\lambda^2,|x-z|) V\psi(z) V\psi(w)\\
(R_0^+ - R_0^-)(\lambda^2,|y-w|)
\frac{\tilde\Phi(\lambda)}{\lambda} \,d\lambda \, dz\, dw   \notag \\
\les \int_{\R^4} \int_{|z|> \frac{|x|}{2}} \frac{|V\psi(z)|\, |V\psi(w)|}{
	|x-z|^{ 2}\la|x-z| + |y-w|\ra \la|x-z| -|y-w|\ra } \, dz\, dw    \\
\les \frac{1}{\la x\ra^{ 3} \la |x| + |y| \ra \la |x| - |y|\ra} 
\les \frac{1}{\la x\ra^5}.
\end{multline*}
The first inequality is Lemma~\ref{lem:lambdaInt}, and the second
follows from Lemma~\ref{lem:largew} with $\alpha=2$, $k=1$,
and Lemma~\ref{lem:A1}.

The integral with $R_0^+(\lambda^2,|x|)$ is similar.  Assuming $|x|>\max(1, 2|y|)$
we have
\begin{multline*}
\int_{\R^4}\int_{|z| > \frac{|x|}{2}} \int_0^\infty 
R_0^+(\lambda^2,|x|) V\psi(z) V\psi(w) \\ (R_0^+ - R_0^-)(\lambda^2,|y-w|)
\frac{\tilde\Phi(\lambda)}{\lambda} \,d\lambda \, dz\, dw   \notag \\
\les \int_{\R^4} \int_{|z|> \frac{|x|}{2}} \frac{|V\psi(z)|\, |V\psi(w)|}{
	|x|^{ 2}\la|x| + |y-w|\ra \la|x| -|y-w|\ra } \, dz\, dw    \\
\les \frac{1}{\la x\ra^{ 3} \la |x| + |y| \ra \la |x| - |y|\ra} 
\les \frac{1}{\la x\ra^5}.
\end{multline*}
These again follow by Lemmas~\ref{lem:lambdaInt}, \ref{lem:largew}, 
and~\ref{lem:A1}.

Finally the part of~\eqref{eqn:Kleftcanc} that comes from integrating where
$|z| < \frac12|x|$ is evaluated in the same manner as~\eqref{eqn:K s}.
Namely,
\begin{multline*}
\int_{|z| < \frac{|x|}{2}} \int_{\R^4} \int_0^1 \int_0^\infty 
V\psi(z) V\psi(w)
\partial_r\big(R_0^+(\lambda^2,|x-sz|)\big) (R_0^+ -R_0^-)(\lambda^2,|y-w|) \\
\times \frac{(-z)\cdot(x-sz)}{|x-sz|}
\frac{\tilde\Phi(\lambda)}{\lambda}\,ds\,d\lambda \, dw\, dz 
\\
\les
\int_0^1 \int_{|z| < \frac{|x|}{2}} \int_{\R^4} \frac{| z V\psi(z)|\, | V\psi(w)|}{|x-sz|^{3}
	\la|x-sz| - |y-w|\ra^2} \,dw\, dz\, ds
\\
\les \int_0^1 \frac{1}{\la x\ra^3 \la|x|-|y|\ra^2}\,ds \les \frac{1}{\la x\ra^5}.
\end{multline*}
Put together, if $|V(w)|\les \la w\ra^{-6-}$, we conclude that in the entire region where $|x| > 2|y|$,
\begin{equation} \label{eqn:KboundX}
|K(x,y)| \les \frac{1}{\la x\ra^5},
\end{equation}
and this describes a bounded operator on all $L^p(\R^4)$, $1 \leq p \leq \infty$ by Lemma~\ref{lem:xbig}.

\subsection{Estimates for $W_{log}$}
Pointwise estimates for the integral kernel of $W_{log}$ follow from the same
arguments as with $W_s$ above, only with Lemma~\ref{lem:logssuck}
used in place of Lemma~\ref{lem:A1}.

\begin{prop} \label{prop:Wlog}
	If $|V(z)| \les \la z\ra^{-4-}$, then the operator $W_{log}$ is bounded on $L^p(\R^4)$
	for $1\leq p < \infty$.
\end{prop}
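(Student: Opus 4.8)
The plan is to run the region-by-region argument of Section~\ref{sec:main} essentially verbatim, tracking only the effect of the new weight $\tilde\Phi(\lambda)\lambda\log\lambda$ in place of $\tilde\Phi(\lambda)\lambda^{-1}$. The kernel of $W_{log}$ is a finite sum of integrals of the form~\eqref{eqn:Ws22} with this replacement; since $L_1(x,y)=\sum_{j,k}a_{jk}V\psi_j(x)V\psi_k(y)$ and each $\psi_j$ satisfies $\int_{\R^4} V\psi_j=0$ by~\eqref{eqn:PV1}, the kernel $L_1$ inherits the cancellation $\int L_1(x,w)\,dw=\int L_1(z,y)\,dz=0$ in both variables, so exactly the same subtractions used for $W_s$ are available here. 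Under $|V(z)|\les\la z\ra^{-4-}$, Lemma~\ref{lem:efn decay} gives $|V\psi(z)|\les\la z\ra^{-7-}$, which exceeds the decay demanded by Lemmas~\ref{lem:A1}, \ref{lem:B1} and~\ref{lem:largew} in every region below, including the single Taylor step (which costs one factor $|w|$ and so requires $|wV\psi(w)|\les\la w\ra^{-6-}$) used when $|y|>2|x|$. The only genuinely new analytic input is the bound for the $\lambda$-integral carrying the weight $\lambda\log\lambda$; this is Lemma~\ref{lem:logssuck}, which relative to Lemma~\ref{lem:lambdaInt} furnishes additional $\la\cdot\ra$-decay coming from the two extra powers of $\lambda$, up to a logarithmic factor.

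First I would treat the region $|y|>2|x|$, which governs the upper range of $p$. Proceeding as in~\eqref{eqn:KjkPV1}--\eqref{eqn:K s}, I subtract the $w$-independent term $(R_0^+-R_0^-)(\lambda^2,|y|)$ using $\int V\psi(w)\,dw=0$, apply the first-order Taylor expansion~\eqref{eqn:Taylor1} on the region $|w|<|y|/2$, and then invoke Lemma~\ref{lem:logssuck} with $j=1$ followed by Lemmas~\ref{lem:A1} and~\ref{lem:B1}; the portion $|w|>|y|/2$ is handled by the analogues of~\eqref{eqn:Kbound2}--\eqref{eqn:Kbound3}. Thanks to the two extra powers of $\lambda$, this upgrades the $W_s$ bound~\eqref{eqn:Kboundnocanc} to $|K(x,y)|\les\la x\ra^{-2}\la y\ra^{-4}$ (possibly times $\log\la y\ra$) when $|y|>2|x|$, so by Lemma~\ref{lem:ybig} with $\beta$ arbitrarily close to $4$ this contribution to $W_{log}$ is bounded on $L^p(\R^4)$ for every $1\le p<\infty$. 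The point to stress is that the gain to $\la y\ra^{-4}$ comes entirely from the $\lambda\log\lambda$ weight and not from a second cancellation --- no $P_eVx=0$ hypothesis is used, which is exactly why only $|V|\les\la z\ra^{-4-}$ is required.

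The remaining two regions carry no obstruction. When $\frac12|x|\le|y|\le 2|x|$, using only $P_eV1=0$ and combining the $|w|<|y|/2$ and $|w|>|y|/2$ estimates as for~\eqref{eqn:KboundRing}, now with the extra decay supplied by Lemma~\ref{lem:logssuck}, yields $|K(x,y)|\les\la x\ra^{-3}\la|x|-|y|\ra^{-2}$ (or better), so Lemma~\ref{lem:xapproxy} gives boundedness on all $L^p(\R^4)$, $1\le p\le\infty$, from this region. When $|x|>2|y|$, I apply the cancellation $\int V\psi(z)\,dz=0$ in the $z$ variable exactly as in~\eqref{eqn:Kleftcanc}, now using the $\lambda\log\lambda$-weighted resolvent estimates (Lemma~\ref{lem:logssuck} and its left-cancellation variant, the log analogue of Lemma~\ref{lem:leftcanc1}) together with Lemmas~\ref{lem:largew} and~\ref{lem:A1}; this gives $|K(x,y)|\les\la x\ra^{-5}$ on $|x|>2|y|$, hence a bounded operator on all $L^p(\R^4)$, $1\le p\le\infty$, by Lemma~\ref{lem:xbig}. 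Assembling the three regions shows $W_{log}$ is bounded on $L^p(\R^4)$ for $1\le p<\infty$, with the failure of the endpoint $p=\infty$ arising solely from the logarithmically-borderline $\la y\ra^{-4}$ decay in the region $|y|>2|x|$, just as for $W_s$ under $P_eVx=0$.

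The main work, as I see it, is the $|y|>2|x|$ estimate: one must verify that Lemma~\ref{lem:logssuck} indeed converts the two extra powers of $\lambda$ into enough additional $\la\cdot\ra$-decay that, after the $z$- and $w$-convolutions, the kernel behaves like $\la x\ra^{-2}\la y\ra^{-4}$ rather than the $\la x\ra^{-2}\la y\ra^{-3}$ of~\eqref{eqn:Kboundnocanc}, and that the attendant logarithmic losses are harmless, i.e.\ absorbable into $\la y\ra^{-4+\eps}$ for every $\eps>0$, which is all Lemma~\ref{lem:ybig} needs to produce the full range $1\le p<\infty$. Once this is in hand the other two regions are a direct transcription of Section~\ref{sec:main} with Lemma~\ref{lem:logssuck} (and the log analogue of Lemma~\ref{lem:leftcanc1}) replacing Lemma~\ref{lem:lambdaInt}.
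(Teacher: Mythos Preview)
Your approach is sound but takes a longer route than the paper.  The paper uses \emph{no} cancellation at all for $W_{log}$: it applies Lemma~\ref{lem:logssuck} directly with $j=2$, $\ell=1$ (note that in that lemma $j$ indexes the power $\lambda^{j-1}$, not a derivative count as in Lemma~\ref{lem:lambdaInt}), absorbs the logarithm into an $\eps$-loss, and then applies Lemma~\ref{lem:A1} twice --- once in $z$, once in $w$ --- to obtain the single global bound
\[
|W_{log}(x,y)|\ \les\ \frac{1}{\la x\ra^{2}\,\la |x|+|y|\ra\,\la |x|-|y|\ra^{3-\eps}}.
\]
All three regions are read off from this one estimate: $\la x\ra^{-6+\eps}$ when $|x|>2|y|$, $\la x\ra^{-3}\la|x|-|y|\ra^{-(3-\eps)}$ when $|x|\approx|y|$, and $\la x\ra^{-2}\la y\ra^{-(4-\eps)}$ when $|y|>2|x|$, after which Lemmas~\ref{lem:xbig}--\ref{lem:xapproxy} finish the proof in a few lines.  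The remark following the proposition says exactly this: the argument is shorter precisely because cancellation is never invoked.

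By contrast, you replay the full $W_s$ region-by-region analysis, subtracting via $P_eV1=0$ in both variables and invoking unstated variants --- a derivative version of Lemma~\ref{lem:logssuck} and a log analogue of Lemma~\ref{lem:leftcanc1} --- that would need to be formulated and proved.  These are certainly accessible by the same oscillatory-integral methods, so your argument can be made to work, but it is more labor for no gain: the paper's direct estimate already produces $\la x\ra^{-6+\eps}$ in the region $|x|>2|y|$ \emph{without} cancellation, which is stronger than the $\la x\ra^{-5}$ you obtain with it.  The point is that the extra factor of $\lambda^{2}$ in the weight $\lambda\log\lambda$ (relative to $\lambda^{-1}$) supplies two additional powers of $\la A-B\ra^{-1}$ in the $\lambda$-integral, and that alone replaces everything the cancellation $P_eV1=0$ accomplished for $W_s$.
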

\begin{rmk}
	In fact $W_{log}$ is bounded on $L^\infty(\R^4)$ as well.  We omit this case as a matter
	of convenience, in order to present a shorter proof with no reliance on cancellation properties. 
\end{rmk}

\begin{proof}
	The kernel of $W_{log}$ is given by
	\begin{multline*}
	W_{log}(x,y)=\frac{1}{\pi i} \int_{\R^8} \int_0^\infty R_0^+(\lambda^2,|x-z|)
	\bigg[ \sum_{j,k=1}^d a_{jk} V\psi_j(z) V\psi_k(w)
	\bigg]\\ [R_0^+ -R_0^-](\lambda^2,|y-w|)
	\tilde \Phi(\lambda)\lambda \log(\lambda )\, d\lambda\, dz\, dw.
	\end{multline*}
	Applying Lemma~\ref{lem:logssuck} with $j=2$ and $\ell=1$, we have
	\begin{align*}
	|W_{log}(x,y)|&  
	\les  \int_{\R^8}\frac{\la \log \la |x-z|-|y-w|\ra\ra |V\psi_j(z)|\,| V\psi_k(w)|}{|x-z|^{ 2}\la |x-z|+|y-w|\ra \la |x-z|-|y-w|\ra^{3}} \, dz\, dw\\
	&  
	\les  \int_{\R^8}\frac{\la z \ra^{-7-}\la w \ra^{-7-}}{|x-z|^{ 2}\la |x-z|+|y-w|\ra \la |x-z|-|y-w|\ra^{3-\eps}} \, dz\, dw,
	\end{align*}
	for any sufficiently small $\eps>0$.  The last inequality follows from the assumption on the decay of $V$ and Lemma~\ref{lem:efn decay}.  Now, applying Lemma~\ref{lem:A1} in both the $z$ and $w$ integrals, we have the upper bound
	$$
	|W_{log}(x,y)|\les \frac{1}{\la x\ra^2 \la |x|+|y|\ra \la |x|-|y|\ra^{3-\eps}}
	\les \begin{cases} \frac{1}{\la x\ra^{6-\eps}} &\text{ if } |x| > 2|y| \\
	\frac{1}{\la x\ra^3\la|x|-|y|\ra^{3-\eps}} &\text{ if } |x| \approx |y| \\
	\frac{1}{\la x\ra^2\la y\ra^{4-\eps}} &\text{ if } |y| > 2|x|  \end{cases}.
	$$
	Applying Lemmas~\ref{lem:xbig}-\ref{lem:xapproxy} concludes the proof, with Lemma~\ref{lem:ybig}
	specifically showing that $W_{log}$ is bounded on $L^p(\R^4)$ for $1 \leq p < \frac{4}{\eps}$.
\end{proof}

\subsection{The Proof of Theorem~\ref{thm:main}}

We conclude the section by proving the main theorem.

\begin{proof}[Proof of Proposition~\ref{prop:PV1}]
	In the preceding subsections, we have assembled the following
	bounds \eqref{eqn:Kboundnocanc}, \eqref{eqn:KboundRing}, and~\eqref{eqn:KboundX} for the integral kernel of $W_s$:
	$$
	|K(x,y)| \les \begin{cases} \frac{1}{\la x\ra^5} &\text{ if } |x| > 2|y|, \\
	\frac{1}{\la x\ra^3\la|x|-|y|\ra^2} &\text{ if } |x| \approx |y|, \\
	\frac{1}{\la x\ra^2 \la y\ra^3} &\text{ if } |y| > 2|x| \end{cases}\, .
	$$
	The pointwise estimates in the first two regions are sufficient to satisfy the
	Schur test, so they describe bounded operators on $L^p(\R^4)$ for all $1 \leq p \leq \infty$.
	When $|y|$ is large, the integral kernel describes a bounded operator only if
	$1 \leq p < 4$ by Lemma~\ref{lem:ybig}, provided $|V(z)|\les \la z\ra^{-4-}$.
	
	With the further assumption that $P_eVx=0$ and the additional decay of $|V(z)| \les \la x\ra^{-6-}$, 
	we are able to replace~\eqref{eqn:Kboundnocanc} with the stronger pointwise
	bound~\eqref{eqn:Kboundcanc}, which asserts that 
	$|K(x,y)| \les \la x\ra^{-2}\la y\ra^{-4}$ when $|y| > 2|x|$.
	This now describes a bounded operator on $L^p(\R^4)$ in the range
	$1 \leq p < \infty$.
\end{proof}

\begin{proof}[Proof of Theorem~\ref{thm:main}]

	Section~4 of \cite{JY4} provides the decomposition $W = W_< + W_>$, with
	$W_<=\Phi(H)(1-(W_{s}+W_{log}+W_{0}+W_{-1}+W_r))\Phi(H_0)$,
	where
	\begin{align} 
	W_{s}&=\frac{1}{\pi i} \int_0^\infty R_0^+(\lambda^2)
	VP_eV(R_0^+(\lambda^2)-R_0^-(\lambda^2))
	\tilde \Phi(\lambda)\lambda^{-1}\, d\lambda. \tag{\ref{eqn:Ws}} \\
	W_{log}&=\frac{1}{\pi i} \int_0^\infty R_0^+(\lambda^2)
	L_1 (R_0^+(\lambda^2)-R_0^-(\lambda^2))
	\tilde \Phi(\lambda)\lambda \log(\lambda )\, d\lambda.  \tag{\ref{eqn:Wlog}}
	\end{align}
	The operator $W_{log}$ as defined here is called $W_1$ in \cite{JY4}.   The remaining terms $W_0$, $W_{-1}$, and $W_r$ are shown to be bounded on $L^p$ for $1\leq p\leq \infty$ 
	in Lemma~4.1 of \cite{JY4}.
	
	Under the assumptions on the decay of $V$ and
	\eqref{Vfour}, it was shown in \cite{JY4} that
	$W_>$ is also bounded on $L^p(\R^4)$ for $1\leq p\leq \infty$.
	Proposition~\ref{prop:Wlog} asserts that $W_{log}$ is bounded
	on $L^p(\R^4)$ in the range $1 \leq p < \infty$.

	The kernels of $\Phi(H)$ and $\Phi(H_0)$ are bounded
	by $C_N \la x-y \ra^{-N}$ for each $N=1,2,\dots$, see
	Lemma~2.2 of \cite{YajWkp3}.  Following \eqref{eqn:Ws}, 
	\eqref{eqn:Ws22}, $W_{s}$ is
	bounded on $L^p(\R^4)$ exactly when the operators
	$K^{jk}$ are.  The range of exponents $p$ for which this occurs
	is determined by Proposition~\ref{prop:PV1}.
\end{proof}

\begin{rmk}\label{PVx2 rmk}
	
	We note that the endpoint
	$p=\infty$ is not covered in Theorem~\ref{thm:main}.  With the additional
	assumptions $P_eVx^2=0$ and $|V(z)|\les \la z\ra^{-8-}$, 
	one can use the techniques
	above to show that the wave operators are bounded
	on $L^p(\R^4)$ for the full range of $1\leq p\leq \infty$.  Here the assumption $P_eVx^2=0$ means
	that $\int_{\R^n} P_2(x)V(x)\psi(x)\, dx=0$ for
	any quadratic monomial $P_2$. As with the $L^\infty$ bound for $W_{log}$,
	we leave the details to the reader.
	
\end{rmk}

\section{Remarks On Threshold Resonances}\label{sec:res}

Finally, we discuss some aspects of how threshold resonances affect the $L^p$ boundedness of the wave operators.  
There is strong evidence that they are not bounded on $L^p(\R^4)$ for any $p>2$ in this case.
By the intertwining identity~\eqref{eqn:intertwine}, $L^p$ boundedness of wave operators with $p>2$
would imply a power-law decay of the linear evolution $e^{itH}P_{ac}(H)$ at the rate $|t|^{(4/p)-2}$ as in~\eqref{eqn:dispersive}.
Instead it is known that, as an operator from $L^1(\R^4)\to L^\infty(\R^4)$,
$$
e^{itH}P_{ac}(H) = \phi(t)P + O(t^{-1}),
$$
with $\phi(t) \sim (\log t)^{-1}$ and $P$ a finite rank operator, \cite{EGG,GT}. 
The exact form of $P$ depends on whether or not there is a zero energy eigenvalue as well.
We suspect that the wave operators are bounded in $L^p(\R^4)$ precisely in the range $1<p\leq2$. 

The low energy resolvent expansion of $R_V^\pm (\lambda^2)$ is considerably more complicated when a zero energy resonance is present.  See, for example \cite{Jen2,EGG,GT}.  For the sake of (relative) simplicity we consider the case of a resonance and no eigenvalue at zero.  Then the most singular $\lambda$ term is of the form
$$
\frac{P_r(x,y)}{\lambda^2 (a\log \lambda +z)}, \qquad a\in \R \setminus\{0\}, \quad z\in \mathbb C \setminus \R,
$$
where $P_r(x,y)$ is a Riesz projection onto the canonical zero energy resonance.  One crucial feature is that
$P_rV1 \not= 0$ for resonances, so most cancellation arguments do not apply.


A heuristic argument suggests that the leading order term of $W_<$ would take the form
\begin{align*}
\int_{\R^8}
\int_0^\infty R_0^+(\lambda^2,|x-z|) \frac{V\varphi(z) V\varphi(w)}{\lambda (a\log \lambda +z)} [R_0^+-R_0^-](\lambda^2, |y-w|) \, d\lambda\, dz\, dw,
\end{align*}
with $\varphi$ the canonical resonance at zero.
This should have the same order of magnitude as what we calculated for $W_s$, perhaps even better by a logarithmic factor, except that no cancellation may be applied.  If the estimate~\eqref{eqn:KboundXnocanc} can only be improved by a factor of $\log \la x\ra$ for large $|x|$, then the operator may not be bounded in $L^1(\R^4)$.

In the region where $|x| \approx |y|$, the lack of cancellation suggests that the best pointwise bounds for $K(x,y)$ will fall into the poorly behaved $\alpha = \beta = 0$ case of Lemma~\ref{lem:xapproxy}.  Establishing boundedness on $L^p(\R^4)$ for such an operator would require more precise information about the sign and smoothness of $K(x,y)$.  This seems closer in spirit to the approach taken in~\cite{YajNew2}, where wave operator bounds are linked to weighted $L^p$ estimates for the Hilbert transform rather than operators with a positive kernel. 

\appendix
\setcounter{section}{0}
\renewcommand{\thesection}{\Alph{section}}

\section{Integral Estimates}\label{sec:appA}

We begin by proving the assertions in Lemmas~\ref{lem:xbig}-\ref{lem:xapproxy}
regarding the $L^p(\R^4)$ behavior of integral operators with certain pointwise bounds.  We then proceed to prove technical lemmas on integral bounds required to complete our arguments.
\begin{proof}[Proof of Lemma~\ref{lem:xbig}]
	Recall the Schur test, if
	$$
	\sup_x \int_{\R^4}|K(x,y)|\, dy+\sup_y \int_{\R^4}|K(x,y)|\, dx <\infty,
	$$
	then the integral operator with kernel $K(x,y)$ is bounded on $L^p(\R^4)$ for all $1\leq p\leq \infty$.  If $|K(x,y)| \les \la x\ra^{-4-\alpha}$ for $\alpha>0$, the full range of $p$ is attained as
	\begin{align*}
	\sup_x \int_{\R^4} |K(x,y)|\,dy \les \sup_x \la x\ra^{-4-\alpha}\int_{|y|< \frac12|x|} \,dy \les \sup_x \la x\ra^{-\alpha} \les 1.
	\\
	\sup_y \int_{\R^4} |K(x,y)|\,dx \les \sup_y \int_{|x| > 2|y|} \la x\ra^{-4-\alpha}\,dx \les \int_{\R^4} \la x\ra^{-4-\alpha}\,dx
	\les 1.
	\end{align*}
	If $\alpha=0$, then $|K(x,y)| \les \la x\ra^{-4}$. Note that for each $x \in \R^4$,
	$$
	\int_{\R^4}K(x,y)f(y)\,dy \les |x|^{-4}\int_{|y| < |x|} |f(y)|\,dy \les {\mathcal M}f(x),
	$$
	where ${\mathcal M}f$ is the Hardy-Littlewod maximal function.  The claimed range of $p$ follows from the well-known bounds of the Hardy-Littlewood maximal function, see~\cite{Stein} for example.
\end{proof}

\begin{proof}[Proof of Lemma~\ref{lem:ybig}]
	It suffices to prove this for $\gamma = 4-\beta$, as $\la x\ra^{-\gamma} \leq \la x\ra^{\beta-4}$ in all other cases.
	Note that $K$ is bounded on $L^1(\R^4)$ since, if $\beta>0$,
	$$
	\sup_y \int_{\R^4} |K(x,y)|\,dx \les \sup_y \la y\ra^{-\beta}\int_{|x| < \frac12|y|} \la x \ra^{\beta - 4}\,dx
	\les 1.
	$$
	Now consider $0 < \beta < 4$.  For each $x \in \R^4$, the weak-$L^{4/\beta}$ norm of $K(x,\,\cdot\,)$
	is uniformly bounded by $\la x\ra^{\beta-4}$.  Hence $K$ defines a bounded operator between the Lorentz spaces
	$L^{\frac{4}{4-\beta},1}(\R^4)$ and $L^{\frac{4}{4-\beta},\infty}(\R^4)$.  Boundedness on $L^p(\R^4)$, $1 \leq p <\frac{4}{4-\beta}$ follows by interpolation, \cite{hunt}.
	The case $\beta = 4$ is the dual formulation of Lemma~\ref{lem:xbig}, hence $K$ defines a bounded operator on $L^p(\R^4)$ for all $1 \leq p < \infty$.
\end{proof}

\begin{proof}[Proof of Lemma~\ref{lem:xapproxy}]
	Once again we can use the Schur test.  This time we integrate  in spherical coordinates for $y$:
	\begin{multline*} 
	\sup_x \int_{\R^4}|K(x,y)|\,dy \les \sup_x \la x\ra^{-3-\alpha}\int_{|y| \approx |x|}\la|x|-|y|\ra^{-1-\beta}\,dy
	\\
	\les \sup_x \la x\ra^{-\alpha} \int_{\frac12|x|}^{2|x|} \la |x|-r\ra^{-1-\beta} \,dr
	\les \begin{cases} \sup_x \la x\ra^{-\alpha} & \text{ if } \beta> 0 \\
	\sup_x \la x \ra^{-\alpha} (1+ \log |x|) & \text{ if } \beta = 0 \end{cases}.
	\end{multline*}
	These are bounded by 1 unless $\alpha$ and $\beta$ are both zero.  By symmetry the result is the same
	if $x$ and $y$ are reversed.
	
	If $\alpha = \beta = 0$, let $f$ be a characteristic function of the annulus $R < |y| < 2R$ with radius $R > 1$.  Then for every
	point $x$ in the same annulus, $\int_{\R^4} |K(x,y)| f(y)\, dy \gtrsim \log \la R\ra$.
\end{proof}

The following integral estimates, along with Lemma~\ref{lem:lambdaInt} and Lemma~\ref{lem:A1}, are direct quotes or minor adaptations of statements in~\cite{GGwaveop}.
Proofs are presented where they differ from the previous work.

\begin{lemma}\label{lem:B1}
	
	Suppose  $0<s\leq 1$, $0\leq\alpha\leq n$, $\beta \geq 1$, and $\gamma \in \R$.
	If $N\geq n+\beta$, then for each fixed constant $R\geq 0$, we have the bound
	\begin{multline}  \label{eqn:B1}
	\int_{|w| < \frac{|y|}{2}}\frac{\la w\ra^{-N}}{|y-sw|^\alpha\la |y-sw|+R\ra^\gamma \la |y-sw|-R\ra^\beta} \, dw\\
	\les \frac{1}{\la y\ra^\alpha \la |y|+R\ra^\gamma \la R-|y| \ra^\beta}.
	\end{multline}
	
\end{lemma}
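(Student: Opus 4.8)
The plan is to exploit the fact that the domain of integration excludes the only real difficulty: the singularity of $|y-sw|^{-\alpha}$ at $w=y/s$ lies in $\{|w|\ge|y|\}$, well outside the region $|w|<\tfrac12|y|$, so the estimate should reduce to the single factor $\la|y-sw|-R\ra^{-\beta}$, exactly as in Lemma~\ref{lem:A1}. First I would record that for $0<s\le1$ and $|w|<\tfrac12|y|$ one has $\tfrac12|y|<|y-sw|<\tfrac32|y|$, hence $|y-sw|\approx|y|$; and since $a\approx b>0$ with $R\ge0$ forces $a+R\approx b+R$, also $\la|y-sw|+R\ra^{-\gamma}\approx\la|y|+R\ra^{-\gamma}$. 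These two factors can therefore be pulled out of the integral at the cost of a constant. When $|y|\le2$ the whole region lies in the unit ball, so $\la w\ra\approx1$, each of $\la|y-sw|+R\ra$ and $\la|y-sw|-R\ra$ is comparable to $\la R\ra$, and the left side is $\lesssim\la R\ra^{-\gamma-\beta}\int_{|w|<|y|/2}|y-sw|^{-\alpha}\,dw\lesssim\la R\ra^{-\gamma-\beta}|y|^{\,n-\alpha}\lesssim\la R\ra^{-\gamma-\beta}$, using $\alpha\le n$; this matches the right side. So I may assume $|y|>2$, whence $|y-sw|\approx|y|\approx\la y\ra$, and the lemma collapses to
\[
\int_{|w|<\frac{|y|}{2}}\frac{\la w\ra^{-N}}{\la|y-sw|-R\ra^{\beta}}\,dw\ \lesssim\ \la|y|-R\ra^{-\beta}.
\]

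To prove this, I would first dispose of the range $\big||y|-R\big|\le1$: there $\la|y|-R\ra\approx1$ and the left side is $\le\int_{\R^n}\la w\ra^{-N}\,dw<\infty$, since $N\ge n+\beta>n$. When $\big||y|-R\big|>1$, split $\{|w|<\tfrac12|y|\}$ according to whether $\big||y-sw|-R\big|\ge\tfrac12\big||y|-R\big|$ or not. On the first piece $\la|y-sw|-R\ra\gtrsim\la|y|-R\ra$, so after extracting $\la|y-sw|-R\ra^{-\beta}\lesssim\la|y|-R\ra^{-\beta}$ the remaining integral of $\la w\ra^{-N}$ is $\lesssim1$. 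On the second piece, use that $\big||y-sw|-|y|\big|\le s|w|$ (the map $w\mapsto|y-sw|$ is $s$-Lipschitz); together with $\big||y-sw|-R\big|<\tfrac12\big||y|-R\big|$ and the triangle inequality this forces $|w|>\tfrac1{2s}\big||y|-R\big|$. Hence, using only $\la|y-sw|-R\ra\ge1$,
\[
\int\frac{\la w\ra^{-N}}{\la|y-sw|-R\ra^{\beta}}\,dw\ \lesssim\ \int_{|w|>\frac1{2s}||y|-R|}|w|^{-N}\,dw\ \lesssim\ \Big(\tfrac{||y|-R|}{s}\Big)^{n-N}\ \lesssim\ \la|y|-R\ra^{-\beta},
\]
the final bound because $n-N\le-\beta$, $s\le1$ and $\big||y|-R\big|>1$ make $\tfrac{||y|-R|}{s}>1$, so $\big(\tfrac{||y|-R|}{s}\big)^{n-N}\le\big(\tfrac{||y|-R|}{s}\big)^{-\beta}=s^{\beta}||y|-R|^{-\beta}\lesssim\la|y|-R\ra^{-\beta}$.

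I expect the only subtle step to be this second piece: it is the annulus where $|y-sw|\approx R$, so that $R\approx|y|$ and $\big||y|-R\big|<s|y|$, and there the weight $\la|y-sw|-R\ra^{-\beta}$ gives essentially nothing while a naive volume bound loses a factor of $s^{-1}$. The Lipschitz estimate is what saves it: pushing $|y-sw|$ away from its value $|y|$ at $w=0$ by an amount $\sim\big||y|-R\big|$ forces $|w|\gtrsim\big||y|-R\big|/s$, and the $s^{-1}$ improvement in the radius of the excluded ball is exactly enough once $N\ge n+\beta$. Everything else—the weight comparisons, the order of the residual $|w|$-integration, and the case $|y|\le2$—is routine, and the whole argument is a specialization to $\{|w|<\tfrac12|y|\}$ of the proof of Lemma~\ref{lem:A1} in~\cite{GGwaveop}.
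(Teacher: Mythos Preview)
Your proof is correct and follows essentially the same approach as the paper's: both observe that on $\{|w|<\tfrac12|y|\}$ one has $|y-sw|\approx|y|$ and $\la|y-sw|+R\ra\approx\la|y|+R\ra$, so the $\alpha$ and $\gamma$ factors may be pulled out, and both treat small $|y|$ by the trivial volume bound. The only difference is that for large $|y|$ the paper defers the remaining $\la|y-sw|-R\ra^{-\beta}$ estimate to Lemma~4.4 of~\cite{GGwaveop}, whereas you supply the self-contained splitting argument (which is indeed the content of that lemma's proof).
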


\begin{proof}
	
	The claim for large $|y|>10$ essentially follows from Lemma~4.4 in \cite{GGwaveop}, which considers only the case $\gamma = 1$.  However one of the first steps is to observe that $\la|y-sw| + R\ra \approx \la|y|+R\ra$ for all $|w| < \frac12|y|$ and $|s| \leq 1$, so the calculations proceed independently of the choice of $\gamma$.  To complete the proof when $|y|\leq 10$, we simply note that the region of integration has volume comparable to $|y|^n$ and the integrand is nearly constant.  Thus
	\begin{multline*}
	\int_{|w| < |y|/2}\frac{\la w\ra^{-N}}{|y-sw|^\alpha\la |y-sw|+R\ra^\gamma \la |y-sw|-R\ra^\beta} \, dw\\ \les \frac{|y|^{n-\alpha}}{ \la |y|+R\ra^\gamma \la R-|y| \ra^\beta}.
	\end{multline*}
\end{proof}

\begin{lemma} \label{lem:largew}
	Let $k \geq 0$ and $0 \leq \alpha < n-1$.
	If $N \geq n+1+k$ and $R \geq 0$ is fixed, then
	\begin{equation} \label{eqn:largew}
	\int_{|w| > \frac{|y|}{2}}\frac{\la w\ra^{-N}}{\la R+|y-w|\ra \la R-|y-w|\ra |y-w|^{\alpha}} \, dw
	\les \frac{1}{\la R+|y| \ra \la R-|y| \ra \la y\ra^{\alpha +k}}.
	\end{equation}
\end{lemma}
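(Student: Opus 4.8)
The plan is to use the restriction $|w| > |y|/2$ to convert $k$ units of the decay carried by $\la w\ra^{-N}$ into decay in $\la y\ra$, and then to invoke Lemma~\ref{lem:A1} to dispose of the remaining integral. As a first step I would record the elementary comparison valid on the region of integration: since $|w| > |y|/2$ one has $\la w\ra^2 = 1 + |w|^2 > \frac14(1+|y|^2) = \frac14\la y\ra^2$, hence $\la w\ra \gtrsim \la y\ra$ there. Because $k \geq 0$, this yields the pointwise bound $\la w\ra^{-N} = \la w\ra^{-k}\la w\ra^{-(N-k)} \les \la y\ra^{-k}\la w\ra^{-(N-k)}$ throughout the domain of integration.

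With the factor $\la y\ra^{-k}$ pulled out, I would then discard the constraint $|w| > |y|/2$ — legitimate since every factor of the integrand is nonnegative, so enlarging the domain can only increase the integral — and apply Lemma~\ref{lem:A1} with $\beta = 1$, with numerator exponent $N - k$ in place of $N$, with the same $\alpha$, and with $R$ playing the role of the fixed constant. Its hypotheses are met: $0 \leq \alpha < n-1$ is assumed, and $N - k \geq n+1 = n+\beta$ follows from $N \geq n+1+k$. This produces
\begin{align*}
\int_{|w| > \frac{|y|}{2}}\frac{\la w\ra^{-N}}{\la R+|y-w|\ra \la R-|y-w|\ra \, |y-w|^{\alpha}} \, dw
&\les \la y\ra^{-k}\int_{\R^n}\frac{\la w\ra^{-(N-k)}}{\la R+|y-w|\ra \la R-|y-w|\ra \, |y-w|^{\alpha}} \, dw \\
&\les \frac{1}{\la R+|y|\ra \la R-|y|\ra \la y\ra^{\alpha+k}},
\end{align*}
which is precisely~\eqref{eqn:largew}.

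I do not expect a real obstacle. When $k=0$ the statement is nothing more than Lemma~\ref{lem:A1} with $\beta=1$ restricted to a subdomain, and when $k>0$ the only new ingredient is the one-line inequality $\la w\ra \gtrsim \la y\ra$ on $\{|w|>|y|/2\}$. The only point that needs genuine care is the bookkeeping of exponents — ensuring that $N-k$ still exceeds $n+\beta$ after $k$ powers of $\la w\ra$ have been expended — which is exactly why the hypothesis is phrased as $N \geq n+1+k$. Should one prefer an argument that does not cite Lemma~\ref{lem:A1}, the same bound follows directly by splitting the $w$-integral according to whether $|y-w|$ is much smaller than, comparable to, or much larger than $|y|$ and estimating each piece, but this is superfluous given that Lemma~\ref{lem:A1} is already available.
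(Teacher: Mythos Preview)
Your argument is correct, and it is shorter than the route the paper takes. The paper does not invoke Lemma~\ref{lem:A1} here at all: it first quotes the $\alpha=0$ case from \cite{GGwaveop} (Lemma~3.2 there), and for $\alpha>0$ it splits the domain according to whether $|y-w|>\tfrac12|y|$ or $|y-w|<\tfrac12|y|$. On the first piece $|w|\approx|y-w|$, so $|y-w|^{-\alpha}\les|y|^{-\alpha}$ and one is back to the $\alpha=0$ estimate; on the second piece the paper integrates in spherical shells about $y$ and appeals to a computation from \cite{GGwaveop}. Your reduction bypasses both steps by noting that once $k$ powers of $\la w\ra^{-1}$ are traded for $\la y\ra^{-k}$, the remaining integral is literally a subdomain of the one handled by Lemma~\ref{lem:A1} with $\beta=1$; the hypothesis $N\ge n+1+k$ is tailored exactly so that $N-k\ge n+\beta$ survives. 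The paper's approach has the mild advantage of being self-contained modulo the $\alpha=0$ lemma, whereas yours leans on the full strength of Lemma~\ref{lem:A1}; but since Lemma~\ref{lem:A1} is already stated and used elsewhere in the paper, your route is the more economical one.
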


\begin{proof}
	The $\alpha=0$ case is proved as Lemma~3.2 in~\cite{GGwaveop}.
	If $\alpha >0$, split the domain of integration into two pieces according to whether 
	$|y-w| > \frac12|y|$ or $|y-w| < \frac12|y|$.  In the former region, 
	$|w|$, and $|y-w|$ are of comparable size, so one may reduce to the $\alpha = 0$ case
	$$
	\int_{|w| > \frac{|y|}{2}}\frac{\la w\ra^{-N}|w|^{-\alpha}}{\la R+|w|\ra \la R-|w|\ra} \, dw
	\les |y|^{-\alpha}	\int_{|w| > \frac{|y|}{2}}\frac{\la w\ra^{-N}}{\la R+|w|\ra \la R-|w|\ra} \, dw,
	$$
	which suffices if $|y| > 1$.  If $|y| < 1$, we bound the integral by
	\begin{multline*}
	\int_{\R^4} \frac{\la w\ra^{-N}|w|^{-\alpha}}{\la R+|w|\ra \la R-|w|\ra} \, dw \\
	\les \int_{|w|<1}\frac{|w|^{-\alpha}}{\la R\ra^2} + \int_{|w| > 1}\frac{\la w\ra^{-N}}{\la R+|w|\ra \la R-|w|\ra}
	\les \frac{1}{\la R\ra^2}.
	\end{multline*}
	
	The region where $|y-w| < \frac12|y|$ is best integrated in spherical coordinates 
	centered at the point $y$.  This leads to the expression
	\begin{equation} \label{eqn:shells}
	\int_0^{\frac12|y|} \frac{1}{r^\alpha \la r+R\ra \la r-R \ra}\int_{|y-w|=r}\la w\ra^{-N}\,dw\,dr,
	\end{equation}
	which is precisely handled in~\cite[equation (31)]{GGwaveop} provided $\alpha < n-1$, with the bound 
	$$
	|\eqref{eqn:shells}| \les \frac{1}{\la y\ra^{N-n-1+\alpha} \la|y|+R\ra \la R-|y|\ra}.
	$$
\end{proof}

\begin{lemma}[\cite{GGwaveop}, Lemma 5.2] \label{lem:logssuck}
	
	Let $R_0^\pm(\lambda^2,A)$ denote the convolution kernel of $R_0^\pm(\lambda^2)$
	evaluated at a point with $|x| = A$.  For each $j \geq 0$,
	\begin{multline}
	\int_0^\infty R_0^+(\lambda^2, A)(R_0^+ - R_0^-\big)(\lambda^2,B)
	\lambda^{j-1}(\log \lambda)^\ell \tilde\Phi(\lambda)\, d\lambda\\
	\les \frac{\la \log \la A-B\ra\ra^\ell}{A^{n-2}\la A+B\ra \la A-B\ra^{n-3+j}}.
	\end{multline}
	
\end{lemma}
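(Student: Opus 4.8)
The claim is a one-dimensional oscillatory-integral estimate, essentially identical in structure to the proof of Lemma~\ref{lem:lambdaInt} in \cite{GGwaveop} (to which it reduces when $\ell=0$ and $n=4$, after identifying the $\partial_B^j$ there with a factor $\lambda^j$), so the plan is to run that argument while carrying the weight $(\log\lambda)^\ell$ along. One starts from the explicit Bessel representations $R_0^{\pm}(\lambda^2,A)=c_n\lambda^{n-2}(\lambda A)^{-\frac{n-2}{2}}H^{\pm}_{\frac{n-2}{2}}(\lambda A)$ and $(R_0^+-R_0^-)(\lambda^2,B)=c_n'\lambda^{n-2}(\lambda B)^{-\frac{n-2}{2}}J_{\frac{n-2}{2}}(\lambda B)$. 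Fix a smooth $\chi$ equal to $1$ on $[0,1]$ and supported in $[0,2]$, and split each Bessel/Hankel factor into a \emph{near} part (times $\chi$ of its argument), governed by the convergent small-argument expansion, and a \emph{far} part (times $1-\chi$), of the form $e^{\pm i\lambda r}\omega_{\pm}(\lambda r)$ with $|\partial_u^m\omega_{\pm}(u)|\les\langle u\rangle^{-\frac{n-1}{2}-m}$ on $u\gtrsim1$. This writes the $\lambda$-integral as a fixed finite sum of pieces indexed by the near/far status of the two legs. The structural point is that the near part of the left leg $R_0^+$ carries no $\lambda$-oscillation and is bounded in absolute value by $A^{-(n-2)}$, its leading term being the $\lambda$-independent Green's function $cA^{-(n-2)}$; this is the origin of the $A^{-(n-2)}$ in the claimed bound.

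In the non-oscillatory pieces (near--near, and more generally whenever $\langle A+B\rangle$ is bounded) one estimates directly: the left leg gives $A^{-(n-2)}$, the right leg gives $\les\lambda^{n-2}$, and $\int_0^{\lambda_1}\lambda^{n-3+j}|\log\lambda|^\ell\,d\lambda$ converges since $n-3+j\ge0$; when $A$ or $B$ is large a near cutoff truncates the $\lambda$-integral and produces the extra decay $\max(A,B)^{-(n-2+j)}\langle\log\max(A,B)\rangle^\ell$, which is stronger than required because $\max(A,B)\gtrsim\langle A+B\rangle\ge\langle A-B\rangle$ and $t\mapsto t^{-s}\langle\log t\rangle^\ell$ is eventually decreasing. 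In a far piece with a near right leg the only phase is $e^{i\lambda A}$; integrating by parts in $\lambda$ arbitrarily often yields $A^{-M}$, and since $B\les A$ there (so $\langle A-B\rangle\le\langle A\rangle$) this dominates the claim. Likewise, in every far piece the non-resonant phase $e^{i\lambda(A+B)}$ (with $A+B>0$) permits unlimited integration by parts, giving $\langle A+B\rangle^{-N}$, which beats the claimed bound once $N$ is large because $A+B\ge|A-B|$.

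The substance lies in the far--far piece carrying the resonant phase $e^{i\lambda(A-B)}$, where $A,B\gtrsim1$. When $A\not\approx B$ the size of $A-B$ is comparable to $A+B$ and one integrates by parts freely as above. When $A\approx B$ one splits the $\lambda$-integral at the oscillation scale $\delta\sim1/\langle A-B\rangle$. On $\lambda<\delta$ there is no oscillation to exploit, and the integrand $\les\lambda^{n-4+j}(AB)^{-\frac{n-1}{2}}|\log\lambda|^\ell$ integrates to $\approx\langle A-B\rangle^{-(n-3+j)}(AB)^{-\frac{n-1}{2}}\langle\log\langle A-B\rangle\rangle^\ell$, which is the claimed bound since $(AB)^{-\frac{n-1}{2}}\approx A^{-(n-1)}\approx A^{-(n-2)}\langle A+B\rangle^{-1}$ when $A\approx B$. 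On $\lambda>\delta$ one integrates by parts $n-2+j$ times --- one more than the $n-3+j$ strictly needed to produce the decay --- so that the growth $\approx\delta^{-1}\langle\log\delta\rangle^\ell$ of the resulting amplitude integral in its lower cutoff exactly cancels the extra factor $|A-B|^{-1}$, leaving $|A-B|^{-(n-3+j)}(AB)^{-\frac{n-1}{2}}\langle\log\langle A-B\rangle\rangle^\ell$ with the logarithm count held at $\ell$ rather than $\ell+1$; the boundary terms at $\lambda=\delta$ are controlled the same way. Throughout one should use smooth rather than hard cutoffs at $\lambda\sim1/A$, so that apparent boundary terms there are genuine transition-region contributions of length $O(1/A)$, dispatched by a trivial estimate.

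The step I expect to be the main obstacle is exactly this resonant regime, and within it the accounting of logarithmic factors. Logs enter from the weight $(\log\lambda)^\ell$, from the $\log(\lambda r)$ in the small-argument expansion of $Y_{\frac{n-2}{2}}$ in even dimensions, and from derivatives falling on $(\log\lambda)^\ell$ during integration by parts, and a careless treatment inflates the count --- either to powers of $\log\langle A+B\rangle$ or $\log A$, which are far too large when $A\approx B$ are both huge while $|A-B|$ is comparatively small, or to $\langle\log\langle A-B\rangle\rangle^{\ell+1}$ coming from the borderline integral $\int\lambda^{-1}|\log\lambda|^\ell\,d\lambda$. Keeping the count at exactly $\ell$ forces the choice of splitting point $\delta\sim1/\langle A-B\rangle$ together with the one-extra-integration-by-parts device above, and the exploitation of the fact that $\int_0^\delta\lambda^{n-4+j}|\log\lambda|^\ell\,d\lambda\approx\delta^{n-3+j}|\log\delta|^\ell$ so that the $\lambda$-integrals concentrate at the oscillation scale rather than near $\lambda\sim1/A$. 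This is precisely the bookkeeping carried out in \cite{GGwaveop}, which is why the lemma is quoted from there rather than reproved here.
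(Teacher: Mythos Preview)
The paper does not prove this lemma at all; it is stated as a direct citation of \cite{GGwaveop}, Lemma~5.2, and no argument is reproduced here.  Your sketch is a faithful reconstruction of the method used in that reference: decompose each resolvent factor into near and far Bessel pieces, handle the non-resonant phases $e^{i\lambda(A+B)}$ and $e^{i\lambda A}$ by unrestricted integration by parts (packaged in this paper as Lemma~\ref{lem:IBP}), and in the resonant far--far piece with phase $e^{i\lambda(A-B)}$ split the $\lambda$-integral at the oscillation scale $\langle A-B\rangle^{-1}$, integrating by parts one order beyond the minimum on the high side so that the resulting amplitude integral concentrates at the splitting point rather than producing the borderline $\int\lambda^{-1}|\log\lambda|^\ell\,d\lambda$ that would cost an extra logarithm.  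You have correctly isolated the only genuinely delicate issue, the log-count in the resonant regime when $A\approx B$, and your resolution of it is the standard one.  One minor imprecision: in the far--near piece the claim ``integrating by parts arbitrarily often yields $A^{-M}$'' overstates matters, since the upper cutoff at $\lambda\sim B^{-1}$ contributes factors of $B$ under differentiation; what one actually obtains is the fixed power $A^{-2(n-2)-j}$ up to logs, which is exactly enough since $B\lesssim A$ forces $\langle A\pm B\rangle\lesssim\langle A\rangle$.
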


Lemmas~\ref{lem:lambdaInt} and~\ref{lem:logssuck} rely on the following oscillatory integral estimate.
\begin{lemma}[\cite{GGwaveop}, Lemma 4.1] \label{lem:IBP}
	Suppose there exists $\beta > -1$ and $M > \beta+1$
	such that $|F^{(k)}(\lambda)| \les \lambda^{\beta - k}$ for all $0 \leq k \leq M$.
	Then given a smooth cutoff function $\tilde\Phi$,
	\begin{equation} \label{eqn:IBP}
	\Big|\int_0^\infty e^{i\rho \lambda}F(\lambda) \tilde\Phi(\lambda)\,d\lambda \Big| 
	\les \la \rho\ra^{-\beta-1}.
	\end{equation}
	If $F$ is further assumed to be smooth and supported in the annulus $L \les \lambda \les 1$
	for some $L > \rho^{-1}>0$, then
	\begin{equation} \label{eqn:IBP2}
	\Big|\int_0^\infty e^{i\rho \lambda}F(\lambda) \tilde\Phi(\lambda)\,d\lambda \Big| \les \la \rho\ra^{-M}L^{\beta+1-M}.
	\end{equation}
\end{lemma}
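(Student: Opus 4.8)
The plan is to prove Lemma~\ref{lem:IBP} as a routine non-stationary phase estimate: the phase $\lambda\mapsto e^{i\rho\lambda}$ has no critical points and $\frac{d}{d\lambda}e^{i\rho\lambda}=i\rho e^{i\rho\lambda}$, so each integration by parts buys a factor $(i\rho)^{-1}$ at the cost of one derivative on the amplitude $F\tilde\Phi$, and the proof consists of arranging these so the power of $\rho^{-1}$ gained matches the decay of $F$ at the origin. For~\eqref{eqn:IBP} I would first handle $\rho\lesssim 1$, where $\la\rho\ra^{-\beta-1}\approx 1$ and the trivial estimate $\int_0^\infty|F(\lambda)\tilde\Phi(\lambda)|\,d\lambda\les\int_0^{C}\lambda^\beta\,d\lambda\les 1$ suffices because $\beta>-1$. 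For $\rho\gg 1$ I split the $\lambda$-integral at $\lambda=\rho^{-1}$: on $[0,\rho^{-1}]$ the trivial bound gives $\int_0^{\rho^{-1}}\lambda^\beta\,d\lambda\les\rho^{-\beta-1}$, while on $[\rho^{-1},\infty)$ I integrate by parts $M$ times, which is legitimate since $F$ is $M$ times differentiable there and $\tilde\Phi$ is smooth with compact support, so all boundary contributions at $+\infty$ vanish. The boundary term at $\lambda=\rho^{-1}$ produced after $k$ integrations by parts has size $\les\rho^{-k}|(F\tilde\Phi)^{(k-1)}(\rho^{-1})|\les\rho^{-k}(\rho^{-1})^{\beta-(k-1)}=\rho^{-\beta-1}$, uniformly in $k$, using $|F^{(j)}(\lambda)|\les\lambda^{\beta-j}$ and that the $j=k-1$ term of the Leibniz expansion dominates when $\rho^{-1}<1$. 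The remaining integral after $M$ steps is $\les\rho^{-M}\int_{\rho^{-1}}^\infty|(F\tilde\Phi)^{(M)}(\lambda)|\,d\lambda$: the Leibniz term where every derivative lands on $F$ contributes $\rho^{-M}\int_{\rho^{-1}}^\infty\lambda^{\beta-M}\,d\lambda\les\rho^{-M}(\rho^{-1})^{\beta-M+1}=\rho^{-\beta-1}$ (the integral converges at $+\infty$ exactly because $M>\beta+1$ and is controlled by its lower endpoint), while every term with a derivative on $\tilde\Phi$ is supported in $\lambda\approx 1$ and contributes $\les\rho^{-M}\les\rho^{-\beta-1}$.

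For~\eqref{eqn:IBP2} the amplitude $F\tilde\Phi$ is now smooth and compactly supported in $(0,\infty)$, so I integrate by parts exactly $M$ times with no boundary terms, reaching $(i\rho)^{-M}\int_0^\infty e^{i\rho\lambda}(F\tilde\Phi)^{(M)}(\lambda)\,d\lambda$. On the support $L\lesssim\lambda\lesssim 1$, Leibniz and the hypothesis give $|(F\tilde\Phi)^{(M)}(\lambda)|\les\sum_{j=0}^{M}\lambda^{\beta-j}\les\lambda^{\beta-M}$ since $\lambda\le 1$, so the integral is $\les\rho^{-M}\int_L^1\lambda^{\beta-M}\,d\lambda\les\rho^{-M}L^{\beta+1-M}$, again because $\beta-M<-1$ makes the $\lambda$-integral dominated by the endpoint $\lambda=L$; the hypothesis $L>\rho^{-1}$ is precisely what makes~\eqref{eqn:IBP2} an improvement on~\eqref{eqn:IBP}.

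I do not anticipate a genuine obstacle. The only delicate points are the bookkeeping of the boundary terms in the first part --- checking that each one has size exactly $\rho^{-\beta-1}$ independently of how many times one has integrated by parts --- and tracking convergence of the model integrals $\int\lambda^{\beta-k}\,d\lambda$, which holds at the origin because $\beta>-1$ and at infinity because $M>\beta+1$.
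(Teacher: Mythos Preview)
Your argument is correct and is the standard non-stationary phase proof one would expect for this statement; the paper does not supply its own proof but simply cites \cite{GGwaveop}, Lemma~4.1, and your write-up matches that approach in all essentials.  The only cosmetic difference worth noting is that many presentations replace the hard cut at $\lambda=\rho^{-1}$ by a smooth partition $1=\chi(\rho\lambda)+(1-\chi(\rho\lambda))$, which trades boundary terms for Leibniz terms involving derivatives of $\chi$; your bookkeeping with explicit boundary contributions is equivalent and your checks that each boundary term and remainder is $O(\rho^{-\beta-1})$ are correct.
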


One additional variation is needed specifically to assist in the proof of~\eqref{eqn:KboundX}
for large $x$.

\begin{lemma}\label{lem:leftcanc1}
	
	In four dimensions we have the bound
	\begin{equation}\label{eqn:leftcanc1}
	\int_0^\infty \partial_A R_0^+(\lambda^2, A)\big(R_0^+ - R_0^-\big)(\lambda^2,B)
	\lambda^{-1}\tilde\Phi(\lambda)\, d\lambda
	\les \begin{cases} \frac{1}{A^{ 3} \la A\ra^{2}} & \text{ if } A > 2B \\
	\frac{1}{A^{3} \la B\ra^{2}} & \text{ if } B > 2A \\
	\frac{1}{A^{3}  \la A - B\ra^2 }
	& \text{ if } A \approx B
	\end{cases} .
	\end{equation}
	
\end{lemma}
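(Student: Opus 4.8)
The plan is to follow the proofs of Lemmas~\ref{lem:lambdaInt} and~\ref{lem:logssuck} from~\cite{GGwaveop}, since the only structural change is the extra radial derivative on $R_0^+$. First I would record the explicit four-dimensional convolution kernels: $R_0^\pm(\lambda^2,r)$ is a fixed multiple of $\frac{\lambda}{r}H_1^{\pm}(\lambda r)$, so $(R_0^+-R_0^-)(\lambda^2,r)$ is a multiple of $\frac{\lambda}{r}J_1(\lambda r)$, while the Bessel recurrence $\frac{d}{du}\big(u^{-1}H_1^\pm(u)\big)=-u^{-1}H_2^\pm(u)$ gives $\partial_r R_0^+(\lambda^2,r)=c\,\frac{\lambda^2}{r}H_2^+(\lambda r)$. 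Hence $\partial_A R_0^+(\lambda^2,A)$ has exactly the oscillatory structure of $R_0^+(\lambda^2,A)$ but carries an extra power of $\lambda$ and a worse singularity: $\partial_A R_0^+(\lambda^2,A)=O(A^{-3})$ as $\lambda A\to0$ and $O\big(\lambda^{3/2}A^{-3/2}\big)$ for $\lambda A\gtrsim1$, which is the source of the $A^{-3}$ prefactor in~\eqref{eqn:leftcanc1}; by contrast the $B$-factor $(R_0^+-R_0^-)(\lambda^2,B)$ is the milder one, being $O(\lambda^2)$ as $\lambda B\to0$ and $O\big(\lambda^{1/2}B^{-3/2}\big)$ for $\lambda B\gtrsim1$, which is why $B$ never appears to a negative power in the final bound.

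Next I would insert the standard small/large-argument splitting of the Hankel and Bessel functions. Fixing a cutoff $\tilde\chi$ supported near the origin, write $H_2^+(u)=\tilde\chi(u)h_0(u)+(1-\tilde\chi(u))e^{iu}\omega_2(u)$ and $J_1(u)=\tilde\chi(u)j_0(u)+(1-\tilde\chi(u))\big(e^{iu}\omega_J^+(u)+e^{-iu}\omega_J^-(u)\big)$, where $h_0(u)=O(u^{-2})$ (with the usual logarithmic corrections entering only at order $u^2$, which are harmless), $j_0(u)=O(u)$, and $\omega_2,\omega_J^\pm$ are symbols of order $-\tfrac12$, i.e. $|(u\partial_u)^k\omega(u)|\lesssim\langle u\rangle^{-1/2}$. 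Substituting into the integrand of~\eqref{eqn:leftcanc1} and multiplying out expresses the integral as a finite sum of terms of the form $\int_0^\infty e^{i\rho\lambda}F(\lambda)\tilde\Phi(\lambda)\,d\lambda$ with $\rho\in\{0,\,A,\,A+B,\,A-B\}$ up to sign, with $F$ either supported near $\lambda=0$ (the fully low term) or supported in an annulus $\lambda\in[c/\min(A,B),c']$ (every term with at least one oscillatory factor), and with $|F^{(k)}(\lambda)|\lesssim C_{A,B}\,\lambda^{\beta-k}$ for a suitable $\beta>-1$ and a coefficient $C_{A,B}$ that is a product of nonpositive powers of $A$ and $B$ ($A^{-3}$ from the low $A$-piece, $A^{-3/2}$ from the oscillatory one, and the analogous powers of $B$). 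The fully low term is handled by brute force: on $\lambda\lesssim\min(1/A,1/B)$ one has $|\partial_A R_0^+(\lambda^2,A)|\lesssim A^{-3}$ and $|(R_0^+-R_0^-)(\lambda^2,B)|\lesssim\lambda^2$, so this term is $\lesssim A^{-3}\langle A+B\rangle^{-2}$, dominated by all three claimed bounds. Every oscillatory term is estimated by Lemma~\ref{lem:IBP}: the bound~\eqref{eqn:IBP} gives $C_{A,B}\langle\rho\rangle^{-\beta-1}$, which with the dominant choice $\rho=A-B$ and $\beta=1$ already yields $A^{-3}\langle A+B\rangle^{-1}\langle A-B\rangle^{-1}$ whenever $\min(A,B)\gtrsim1$. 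Specializing: if $A>2B$ or $B>2A$ then $\langle A-B\rangle\approx\langle A+B\rangle\approx\max(A,B)$ and this collapses to the first two cases; if $A\approx B$ with $|A-B|\gtrsim1$ the same term gives $A^{-3}\langle A-B\rangle^{-2}$; and if $A\approx B$ with $|A-B|$ small then instead the $e^{i\lambda(A+B)}$ term has $\rho\approx A$ and gives $A^{-3}\langle A+B\rangle^{-2}\lesssim A^{-3}\langle A-B\rangle^{-2}$.

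The hard part will be the regime $A>2B$ (and, symmetrically, $B>2A$) with both $A$ and $B$ large, where a single application of~\eqref{eqn:IBP} is not enough: the oscillatory coefficient is $C_{A,B}=A^{-3/2}B^{-3/2}$ and $\langle A-B\rangle^{-2}\approx A^{-2}$, giving only $A^{-7/2}B^{-3/2}$, which falls short of the claimed $A^{-3}\langle A\rangle^{-2}\approx A^{-5}$ as soon as $B\ll A$. The remedy is to integrate by parts well past the natural order using estimate~\eqref{eqn:IBP2}: the relevant amplitude is supported in $\lambda\gtrsim L=c/B$ and the hypothesis $L>\rho^{-1}$ holds because $\rho=A-B\approx A>B$, so~\eqref{eqn:IBP2} contributes $\langle A\rangle^{-M}L^{\beta+1-M}=\langle A\rangle^{-M}B^{M-2}$; since $B<A/2$ the surplus factor $B^{M-2}$ is bounded by $(A/2)^{M-2}$, and choosing $M$ large converts the stray $B^{-3/2}$ into $A^{-3/2}$ and recovers the stated $A^{-5}$ decay. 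The same device, with the roles of $A$ and $B$ exchanged and using $|A-B|\approx B$, handles $B>2A$. Once this point is settled the rest is routine bookkeeping of powers, carried out exactly as in~\cite{GGwaveop}; in particular the logarithmic corrections in the small-argument expansion of $Y_2$ never cause trouble since they come with extra positive powers of $\lambda$, and the uniform amplitude bounds $|F^{(k)}(\lambda)|\lesssim\lambda^{\beta-k}$ are precisely what the low/high cutoff $\tilde\chi$ is there to guarantee.
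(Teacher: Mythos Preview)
Your approach is correct and follows the same general strategy as the paper (decompose the Bessel/Hankel factors into their low- and high-argument pieces, then apply Lemma~\ref{lem:IBP}).  The organization differs in one useful way worth noting.  The paper observes that the oscillatory part of $\partial_A R_0^+(\lambda^2,A)$ is $\frac{e^{i\lambda A}}{A^2}\lambda\,\Psi_{1/2}(\lambda A)$, so that after the $\lambda$ cancels the explicit $\lambda^{-1}$ in the integrand this piece is \emph{exactly} the oscillatory part of $R_0^+(\lambda^2,A)$ itself.  Hence its contribution is already covered by the proof of Lemma~\ref{lem:lambdaInt} in~\cite{GGwaveop}, which gives the bound $A^{-2}\la A+B\ra^{-1}\la A-B\ra^{-2}$ directly---dominating all three cases of~\eqref{eqn:leftcanc1} with no further case analysis.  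The only genuinely new term is then the non-oscillatory $A$-piece $A^{-3}\tilde\Omega(\lambda A)$; here the paper notes that its product with the oscillatory $B$-piece has empty support unless $A\lesssim B$, so the troublesome regime $A>2B$ never arises for this term, and the two remaining combinations are handled by size and by a single application of~\eqref{eqn:IBP}.

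Your route instead treats all four low/high combinations from scratch and confronts the high-$A$ $\times$ high-$B$ term head-on when $A>2B$, fixing the shortfall via~\eqref{eqn:IBP2} with $L\approx B^{-1}$.  That argument is valid (with $M\geq 4$ one gets $A^{-3/2}B^{M-7/2}A^{-M}\leq A^{-5}$ since $B<A$), but it is work the paper avoids by recycling Lemma~\ref{lem:lambdaInt}.  One small point: your intermediate claim that the oscillatory terms already give $A^{-3}\la A+B\ra^{-1}\la A-B\ra^{-1}$ ``whenever $\min(A,B)\gtrsim 1$'' is not quite right for $A>2B$ (indeed you immediately call this the hard part), so that sentence should be restricted to the regime $A\approx B$ and $B>2A$.
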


\begin{proof}

	We recall the expansion for the free resolvent, see Section~4 of \cite{GGwaveop},
	\begin{align}
	R_0^\pm(\lambda^2, A) &= \frac{1}{A^{ 2}}\Omega (\lambda A) + \frac{e^{\pm i\lambda A}}{A^{ 2}}
	\Psi_{\frac{1}{2}}(\lambda A), 
	\label{eqn:asymptotics1}
	\end{align} where $\Omega$ is a bounded compactly supported function that is smooth everywhere except possibly at zero,
	and each $\Psi_{\frac{1}{2}}$ is a smooth function
	supported outside the unit interval  that asymptotically behaves like $(\,\cdot\, )^{1/2}$
	and whose $k^{th}$ derivative behaves like $(\,\cdot\,)^{(1-2k)/2}$.
	This expansion follows from writing $R_0^\pm(\lambda^2,r)= \frac{-\lambda}{8\pi r} [ Y_1(\lambda r)\mp iJ_1(\lambda r)  ]$
	and using the asymptotic properties of the Bessel functions,  see \cite{AS}.
	Additional properties include
	\begin{align}
	\label{eqn:R+minusR-}
	R_0^+(\lambda^2, B) - R_0^-(\lambda^2, B) &= \lambda^{ 2}\Omega(\lambda B)
	+ \frac{e^{i\lambda B}}{B^{ 2}} \Psi_{\frac{1}{2}}(\lambda B)
	+ \frac{e^{-i\lambda B}}{B^{ 2}} \Psi_{\frac{1}{2}}(\lambda B),\\
	\partial_A 	R_0^\pm(\lambda^2, A) &=\frac{e^{i\lambda A}}{A^{ 2}} \lambda  \Psi_{\frac{1}{2}}(\lambda A)+\frac{1}{A^3} \widetilde \Omega(\lambda A).
	\end{align}
	Here $\widetilde \Omega (\lambda A)$ is a compactly supported function that satisfies $|\partial_\lambda^j \widetilde \Omega(\lambda A)|\les \lambda^{-j} $. 
	
	Using the bounds proven in Lemma~2.2 in \cite{GGwaveop} which were considered when $j=1$ (and $n=4$), we have
	\begin{multline*}
	\int_0^\infty \frac{e^{i\lambda A}}{A^{ 2}}   \Psi_{\frac{1}{2}}(\lambda A)
	\left(\lambda^{ 2}\Omega(\lambda B)
	+ \frac{e^{\pm i\lambda B}}{B^{ 2}} \Psi_{\frac{1}{2}}(\lambda B)\right)
	\tilde\Phi(\lambda)\, d\lambda\\ \les
	\frac{1}{A^2 \la A+B\ra \la A-B\ra^2}.
	\end{multline*}
	This satisfies the desired bounds, thus
	we need only bound the contribution of the terms
	$$
	\int_0^\infty \frac{1}{A^3}\widetilde \Omega(\lambda A) \left( \lambda^2  \Omega (\lambda B) + \frac{e^{\pm i\lambda B}}{B^{ 2}} \Psi_{\frac{1}{2}}(\lambda B)
	\right) \, d\lambda.
	$$
	The first term is supported on $\lambda \les \min(1, A^{-1}, B^{-1})$, and hence is bounded by $A^{-3}\min (\la A\ra^{-2} , \la B\ra^{-2} )$.
	Using Lemma~\ref{lem:IBP} with $\rho=B$ and $L=\la B\ra^{-1}$, the second term is bounded by $A^{-3}\la B\ra^{-2}$ and is zero unless $A\les B$.  
\end{proof}


\begin{thebibliography}{9}
	
	\bibitem{AS} Abramowitz, M. and  I.~A. Stegun. \emph{Handbook of mathematical functions with
		formulas, graphs, and mathematical tables.} National Bureau of Standards
	Applied Mathematics Series, 55. For sale by the Superintendent of Documents,
	U.S. Government Printing Office, Washington, D.C. 1964
	
	
	\bibitem{agmon} Agmon, S. {\em Spectral properties of Schr\"odinger
		operators and scattering theory.}
	Ann.\ Scuola Norm.\ Sup.\ Pisa Cl.\ Sci. (4) 2 (1975), no.~2, 151--218.
	
	\bibitem{Bec}
	Beceanu, M. \emph{Structure of wave operators for a scaling-critical class of potentials}. Amer. J. Math. 136 (2014), no. 2, 255--308.
	
	
	
	
	
	\bibitem{DF}
	D'Ancona, P., and Fanelli, L.
	\emph{$L^p$-boundedness of the wave operator for the one dimensional Schr\"odinger operator}. 
	Comm. Math. Phys. 268 (2006), no. 2, 415--438. 
	
	\bibitem{EGG}
	Erdo\smash{\u{g}}an, M.~B., Goldberg, M.~J.,  and Green, W.~R. \emph{Dispersive estimates for four dimensional Schr\"odinger and wave equations with obstructions at zero energy}. Comm. PDE. 39 (2014), no. 10,  1936--1964.
	
	
	\bibitem{EG} Erdo\smash{\u{g}}an, M.~B.  and Green, W.~R. \emph{Dispersive estimates for Schr\"{o}dinger operators in dimension two with obstructions at zero energy}. Trans. Amer. Math. Soc. 365 (2013), 6403--6440.
	
	
	
	
	
	\bibitem{ES2} Erdo\smash{\u{g}}an, M.~B.  and Schlag W. \emph{Dispersive estimates for Schr\"{o}dinger operators in the presence of a resonance and/or an eigenvalue at zero energy in dimension three: I}. Dynamics of PDE 1 (2004), 359--379.
	
	\bibitem{FY} Finco, D.  and Yajima, K. \emph{The $L^p$ boundedness of wave operators 
		for Schr\"{o}dinger operators with threshold singularities II. Even dimensional case}.  J.\ Math.\ Sci.\
	Univ.\ Tokyo  13  (2006),  no. 3, 277--346.
	
	\bibitem{goldE} Goldberg, M. \emph{A Dispersive Bound for Three-Dimensional Schr\"odinger Operators with Zero Energy Eigenvalues.} Comm.\ PDE 35, no. 9 (2010), 1610--1634.
	
	
	
	\bibitem{GGodd}
	Goldberg, M. and Green, W. \emph{Dispersive Estimates for higher
		dimensional Schr\"odinger Operators with threshold
		eigenvalues I: The odd dimensional case}. J. Funct. Anal., 269 (2015), no. 3, 633--682. 
	
	\bibitem{GGeven}
	Goldberg, M. and Green, W. \emph{Dispersive Estimates for higher
		dimensional Schr\"odinger Operators with threshold
		eigenvalues II: The even dimensional case}.  To appear in
	J. Spectr. Theory.  arXiv:1409.6328
	
	\bibitem{GGwaveop}
	Goldberg, M. and Green, W. 
	\emph{The $L^p$ boundedness of wave operators for Schr\"odinger operators with threshold singularities}.   Adv. Math. 303 (2016), 360--389.
	
	
	
	
	
	\bibitem{GT}
	Green, W. and Toprak, E. \emph{Decay estimates for four dimensional Schr\"odinger, Klein Gordon and wave equations with obstructions at zero energy}. To appear in Differential Integral Equations,  	arXiv:1509.06262.
	
	\bibitem{hunt} Hunt, R.A. \emph{An extension of the Marcinkiewicz interpolation theorem to Lorentz spaces}, Bull.\ Amer.\ Math.\ Soc.\ 70 (1964), pp.\ 803--807.
	
	
	
	\bibitem{Jen2}
	Jensen, A. \emph{Spectral properties of Schr\"odinger operators and time-decay of the wave functions. Results in 
		$L^2 (R^4)$}.
	J.\ Math.\ Anal.\ Appl.\ 101 (1984), no. 2, 397--422.
	
	
	
	\bibitem{JY4}
	Jensen, A., and Yajima, K.
	\emph{On $L^p$ boundedness of wave operators for 4-dimensional Schr\"odinger operators with threshold singularities.}
	Proc. Lond. Math. Soc. (3) 96 (2008), no. 1, 136--162. 
	
	\bibitem{JSS}  Journ\'e, J.-L.,  Soffer,  A., and Sogge, C.\ D. {\em Decay
		estimates for Schr\"odinger operators.} Comm.\ Pure Appl.\ Math.\ 44 (1991),
	no.~5, 573--604.
	
	
	
	\bibitem{Mur} Murata, M. {\em Asymptotic expansions in time for solutions of Schr\"odinger-type
		equations}. J.\ Funct.\ Anal.\ 49 (1) (1982), 10--56.
	
	
	
	
	
	\bibitem{RS1} Reed, M.  and  Simon, B. \emph{Methods of Modern Mathematical Physics I: Functional Analysis, IV: Analysis of Operators}, Academic Press, New York, NY, 1972.
	
	
	
	
	
	
	
	\bibitem{Stein}
	Stein, E. \emph{Singular Integrals and Differentiability Properties of Functions}, Princeton University Press, Princeton, NJ, 1971.
	
	
	\bibitem{Wed}
	Weder, R.  \emph{The $W^{k,p}$-continuity of the wave operators on the line.} Comm. Math. Phys. vol. 208 (1999), 507--520.
	
	\bibitem{YajWkp1}
	Yajima, K. \emph{The $W^{k,p}$-continuity of wave operators for Schr\"odinger operators.}
	J. Math. Soc. Japan 47 (1995), no. 3, 551--581. 
	
	
	\bibitem{YajWkp3}
	Yajima, K. \emph{The $W^{k,p}$-continuity of wave operators for Schr\"odinger operators. III. Even-dimensional cases $m\geq 4$.}
	J. Math. Sci. Univ. Tokyo 2 (1995), no. 2, 311--346. 
	
	
	\bibitem{Yaj}
	Yajima, K. \emph{The $L^p$ Boundedness of wave operators for Schr\"{o}dinger
		operators with threshold singularities I. The odd dimensional case}. J.\ Math.\
	Sci.\ Univ.\ Tokyo 13 (2006), 43--94.
	
	\bibitem{YajNew}
	Yajima, K. \emph{Wave Operators for Schr\"odinger Operators with Threshold Singularities, Revisited}.  Preprint, 
	arXiv:1508.05738.
	
	\bibitem{YajNew2}
	K. Yajima, \emph{Remark on the
		$L^p$-boundedness of wave operators for Schr\"odinger operators with threshold singularities}, Documenta Mathematica
	21
	(2016), 391--443.
	
	\bibitem{YajNew3}
	Yajima, K.  \emph{On wave operators for Schr\"odinger operators with threshold singularities in three dimensions.}  Preprint, arXiv:1606.03575.
	
	
\end{thebibliography}
\end{document}